\pgfplotsset{compat=1.12}
\let\oldReturn\Return 
\renewcommand{\Return}{\State\oldReturn}
\numberwithin{theorem}{section}
\numberwithin{equation}{section}
\newcommand{\TheTitle}{A Hierarchical A-Posteriori Error Estimator\newline for the Reduced Basis Method} 
\newcommand{\TitleShort}{A Hierarchical Error Estimator for the RBM}
\newcommand{\TheAuthors}{S.\ Hain, M.\ Ohlberger, M.\ Radic, K.\ Urban}
\newcommand{\cN}{\mathcal{N}}
\newcommand{\cO}{\mathcal{O}}
\newcommand{\cP}{\mathcal{P}}
\newcommand{\C}{\mathbb{C}}
\newcommand{\N}{\mathbb{N}}
\newcommand{\R}{\mathbb{R}}
\headers{\TitleShort}{\TheAuthors}
\title{{\TheTitle}\thanks{Date: \today.%
	\funding{M.R.\ was supported by the European Union within the EU-MORNet project.}}}
\author{%
  Stefan Hain\thanks{
  	Ulm University, Institute for Numerical Mathematics,  Helmholtzstr.\ 20, 89081 Ulm, 
    \{\email{stefan.hain}, \email{mladjan.radic}, \email{karsten.urban}\}\email{@uni-ulm.de}}
   \and
    Mario Ohlberger\thanks{University of M\"unster, Applied Mathematics,  Einsteinstr.\ 62, 48149 M\"unster,  
  	\email{mario.ohlberger@uni-muenster.de}.}
  \and
  Mladjan Radic\footnotemark[2]
  \and
  Karsten Urban\footnotemark[2]
}
\begin{document}
\maketitle

\begin{abstract}
In this contribution we are concerned with tight a posteriori error estimation for projection based model order reduction of  $\inf$-$\sup$ stable parameterized variational problems. In particular, we consider the Reduced Basis Method in a Petrov-Galerkin framework, where the reduced approximation spaces are constructed by the (weak) Greedy algorithm. 
We propose and analyze a hierarchical a posteriori error estimator which evaluates the difference of two reduced approximations of different accuracy. Based on the a priori error analysis of the (weak) Greedy algorithm, it is expected that the hierarchical error estimator is sharp with efficiency index close to one, if the Kolmogorov N-with decays fast for the underlying problem and if a suitable saturation assumption for the reduced approximation is satisfied. We investigate the tightness of the hierarchical a posteriori estimator both from a theoretical and numerical perspective.
For the respective approximation with higher accuracy we study and compare basis enrichment of Lagrange- and Taylor-type reduced bases. Numerical experiments indicate the efficiency for both, the construction of a reduced basis using the hierarchical error estimator in a weak Greedy algorithm, and for tight online certification of reduced approximations. This is particularly relevant in cases where the $\inf$-$\sup$ constant may become small depending on the parameter. In such cases a standard residual-based error estimator -- complemented by the successive constrained method to compute a lower bound of the parameter dependent $\inf$-$\sup$ constant -- may become infeasible.
\end{abstract}

\begin{keywords}
Reduced Basis Method, A-Posteriori Error Estimator, Hierarchical Error Estimator
\end{keywords}

\begin{AMS}
	65N30,
	65N15,
	65M15
\end{AMS}

\section{Introduction}

Model order reduction has become a field of great significance, both with respect to solving real world problems and with respect to mathematical research. In this article, we consider the Reduced Basis Method (RBM), which is a well-known projection based model order reduction technique for Parameterized Partial Differential Equations (PPDEs), for 
instance in multi-query and/or real time contexts, \cite{Haasdonk:RB,RozzaRB,QuarteroniRB}. The key idea for the RBM is to construct a problem specific reduced order model -- e.g.\ in a computationally expensive offline phase -- and then use this reduced model to construct an approximation in an online phase extremely fast by solving very low-dimensional Petrov-Galerkin problems. 

A posteriori error estimates play an important role within the RBM, at least for the following reasons: (1) The error estimator is used in a weak Greedy algorithm to construct the reduced model. This is e.g. done by maximizing the error estimator 
over a discrete number of reduced solutions with respect to a finite training set of parameters (`sampling') and to enrich the preliminary reduced basis by the truth solution (`snapshots' ) that corresponds to the worst approximated reduced solution. 
(2) After the online computation of a reduced approximation as a linear combination of the snapshots, an error estimator yields an upper bound for the error and thus certifies the reduced numerical approximation.

This shows that such error estimators need to satisfy a number of conditions: (i) The computation of the error estimator for some given parameter has to be very fast, i.e. with a complexity that only depends on the degrees of freedom of the reduced approximation space (for the basis generation, this allows a large and representative training set; in the online phase, the certification has to be at least as efficient as the computation of the reduced approximation itself); 
(ii) The error estimator has to be tight in order to yield an efficient and reliable estimate of the true error.

So far, the most common approach for constructing such a posteriori RB error estimators is residual-based. This usually involves an efficient computation of (an approximation of) the residual and the inverse of the $\inf$-$\sup$ constant. As for many problems, the $\inf$-$\sup$ constant cannot be computed or estimated in an efficient way, the Successive Constraint Method (SCM) \cite{CHEN20081295,MaxwellSCM,SCM} is used for the calculation of a lower bound. This involves at least two drwabacks, namely the computational complexity of the SCM, in particular if a very good approximation is needed and --related-- the lower bound maybe very small (and thus almost useless for the residual-based error estimator) if the $\inf$-$\sup$ constant is small. Moreover, it has  numerically been observed, that the SCM may not always converge.

Hierarchical error estimators use the difference of two approximations of different order to bound the unknown error. This approach is well-known e.g.\ for ordinary differential equations \cite{MR611953} and adaptive finite elements \cite{BankHier1,MR1392158,MR2950678,Huang2011,MR703179,MR2776914}, just to mention a few. Within the RBM, such an approach has been used to measure the error of the empirical interpolation method (EIM) \cite{EIM,CTU09,DHO12}. We also suppose that such estimators might have been in used in some real-world problems. However, to the very best of our knowledge, we are not aware of an article investigating its use for a posteriori error estimation for RB approximations. 

We investigate two situations: (1) A family of reduced spaces $(X_N)_{i=1,\ldots,N_{\text{max}}}$ is given. Then, we choose $N<M$ and use the difference $\| u_N-u_M\|_X$ of two RB approximations as error estimator in the online phase. We study the performance in particular in those cases, where the $\inf$-$\sup$ constant is small or hard to access numerically. This is e.g.\ the case for the Helmholtz problem, where the $\inf$-$\sup$ constant behaves like $\mu^{-7/2}$, the wave number 
$\mu\in\R^+$ being the parameter. Other examples (that will not be treated here) include transport and wave propagation problems, where one can may construct an optimal reduced space in a possibly costly offline stage but cannot use the residual online, since it cannot be computed efficiently, \cite{RB:Transport,RB:Wave}.  (2) A residual-based error estimator cannot be used at all. In this case, one would like to construct the reduced basis with the aid of the hierarchical error estimator. This, however, is not completely straightforward, since $X_M$ needs to be constructed for given $X_N$. It turns out that a standard greedy procedure may not work in this case. This is the reason why we suggest to use a Taylor-type RB approach for constructing the reduced space of higher accuracy. Numerical experiments are given to demonstrate 
the efficiency of the resulting approach. 

In both cases, (1) and (2), we investigate the effectivity of the hierarchical error estimator, both theoretically and numerically. For the latter purpose, we suggest an offline procedure to determine sharp estimates for the effectivity that can also be used in the online stage.

The remainder of this paper is organized as follows: In Section \ref{sec:preliminaries}, we collect some preliminaries on PPDEs and RBMs. Section \ref{sec:hier_err_est} is devoted to the introduction of the hierarchical error estimator including the analysis and realization. We report on several numerical experiments in Section \ref{sec:NumericalResults} for the standard thermal block problem and the Helmholtz problem in a high frequency regime, i.e. with quite small $\inf$-$\sup$ constants. We mention that our RB hierarchical error estimate has recently been used in the scope of other problems \cite{RB:Transport,2017arXiv170404139F,RB:Wave}.

\section{Preliminaries}
\label{sec:preliminaries}
In this section, we collect the main facts and background material that is used in the sequel.

\subsection{Parameterized Partial Differential Equations (PPDEs)}
Let $\mathcal{P} \subset \mathbb{R}^P$, $P \in \mathbb{N}$, be a compact parameter space. For suitable Hilbert (function) spaces $X$ and $Y$ consider the parameterized variational problem (e.g.\ a PDE): 
\begin{align}
\label{al:ContProbl}
\text{For } \mu \in \mathcal{P} \text{ find } u (\mu) \in X: \; a (u (\mu),v ; \mu) = f (v ; \mu) \qquad \forall v \in Y,
\end{align}
where $a : X \times Y \times \mathcal{P} \rightarrow \mathbb{K}\in\{ \mathbb{R},\mathbb{C}\}$ is a continuous sesquilinear form and $f : Y \times \mathcal{P} \rightarrow \mathbb{K}$ is a given continuous linear form. For ensuring the uniform well-posedness of \eqref{al:ContProbl} for any $\mu \in \mathcal{P}$ one typically assumes that 
\begin{alignat*}{3}
	& \forall \mu \in \mathcal{P}\!: 
	&& \; \sup_{u \in X} \sup_{v \in Y} \frac{\vert a(u,v;\mu) \vert}{\|u\|_X \|v\|_Y} 
	\leq \gamma (\mu) \leq \gamma_{\text{UB}} < \infty , \quad
	&& \text{\emph{(continuity)}}\\
	& \forall \mu \in \mathcal{P}\!: 
	&&  \; \inf_{u \in X} \sup_{v \in Y} \frac{\vert a(u,v;\mu) \vert}{\|u\|_X \|v\|_Y} 
	\geq \beta (\mu) \geq \beta_{\text{LB}}> 0, \quad
	&& \text{\emph{($\inf$-$\sup$ condition)}}\\
	& \forall \mu \in \mathcal{P}\!: 
	&&  \; \inf_{v \in Y} \sup_{u \in X} \frac{\vert a(u,v;\mu) \vert}{\|u\|_X \|v\|_Y}  > 0, \quad
	&& \text{\emph{(surjectivity)}}.
\end{alignat*}
Even though these assumption yield a \emph{uniform} well-posedness (w.r.t.\ the parameter), we note, that particularly $\beta_{\text{LB}}$ may be fairly small, which will be crucial below.

\subsection{The `Truth'}
Next, we require the availability of a detailed or fine discretization in terms of suitable conforming trial and test spaces $X^\mathcal{N} \subset X$ and $Y^\mathcal{N} \subset Y$, where (just for simplicity) $\text{dim}(X^\mathcal{N}) = \text{dim}(Y^\mathcal{N}) =\mathcal{N} < \infty$.  The discretized parameterized problem then reads for any $\mu \in \mathcal{P}$:
\begin{align}\label{al:DiscrProbl}
	\text{Find } u^{\mathcal{N}} (\mu) \in X^{\mathcal{N}}\!\!:\,  
		\, a^{\mathcal{N}} (u^{\mathcal{N}} (\mu),v^{\mathcal{N}} ; \mu) 
			= f^{\mathcal{N}} (v^{\mathcal{N}} ; \mu) 
				\quad \forall v^{\mathcal{N}} \in Y^{\mathcal{N}},
\end{align}
where $a^{\mathcal{N}} : X^{\mathcal{N}} \times Y^{\mathcal{N}} \times \mathcal{P} \rightarrow \mathbb{K}$ and $f^{\mathcal{N}} : Y^{\mathcal{N}} \times \mathcal{P} \rightarrow\mathbb{K}$ are appropriate discrete sesquilinear and linear forms. The discrete sesquilinear and linear forms are continuous with the same constants. To ensure the uniform well-posedness of \eqref{al:DiscrProbl} for every $\mu \in \mathcal{P}$ it is a standard assumption to require
\begin{alignat}{3}
	& \forall \mu \in \mathcal{P}\!\!:
	&&  \; \inf_{u^\mathcal{N} \in X^\mathcal{N}} \sup_{v^\mathcal{N} \in Y^\mathcal{N}} 
\frac{\vert a^\mathcal{N} (u^\mathcal{N},v^\mathcal{N};\mu) \vert }{\|u^\mathcal{N}\|_{X^\mathcal{N}} \|v^\mathcal{N}\|_{Y^\mathcal{N}}} = \beta^\mathcal{N} (\mu) 
	\geq  \beta_{\text{LB}}^\cN > 0. 
	\label{al:discInfSup}
\end{alignat}
Here $\|\cdot\|_{X^\cN}$ and $\|\cdot\|_{Y^\cN}$ may be numerical approximations to $\|\cdot\|_X$ and $\|\cdot\|_Y$, respectively, but may also be discrete norms (such as for discontinuous Galerkin -dG- methods). Such a detailed discretization can e.g.\ arise from Finite Element, Finite Volume, dG or Spectral Element discretizations.

It is a standard assumption that this detailed discretization is sufficiently fine so that the error $\| u(\mu)-u^\cN(\mu)\|_X$ is negligible, which is the reason why $u^\cN(\mu)$ is often called the `truth'. In particular, we assume here that $X^\cN$ and $Y^\cN$ are the same for all parameters, but mention that adaptive discretizations may also be used 
(cf. \cite{WaveletRB,HDO11}).

\subsection{The Reduced Basis Method (RBM)}
\label{sec:RBM}
We briefly recall the main ingredients of the Reduced Basis Method (RBM) which we need here and refer e.g.\ to \cite{Haasdonk:RB,RozzaRB,QuarteroniRB} for more details. 
The aim of the RBM is to determine a highly reduced model of size $N \ll \mathcal{N}$ in terms of reduced trial and test spaces $X_N\subset X^\cN$, $Y_N\subset Y^\cN$.
Such a reduced model is typically determined in an offline phase, which might be computationally costly. This is done by selecting certain parameters $S_N:=\{\mu_1, \ldots , \mu_N\}$, computing the corresponding (truth) snapshots $\xi_i := u^{\mathcal{N}}(\mu_i)$, $i=1,\ldots, N$, and setting $X_N := \text{span} \{  \xi_1 , \ldots, \xi_N \}$, $N \ll \mathcal{N}$. The basis may be orthonormalized for stability reasons. 

The choice of the snapshot parameter set $S_N$ is usually based upon an efficiently computable a posteriori error estimator $\Delta_N(\mu)$ which is then maximized in a greedy manner over a finite training set $\mathcal{P}_{\text{train}} \subset \mathcal{P}$. This approach is called \emph{weak greedy}. Sometimes, the error is used instead of an error estimator, which is then termed as \emph{strong greedy}. Other approaches such as nonlinear optimization of an error estimator have also been investigated, e.g.\ \cite{Oli}.

In order to ensure well-posedness of the reduced problem, namely:
\begin{align}
\label{al:RBApprox}
\text{For } \mu \in \mathcal{P} \text{ find } u_N (\mu) \in X_N\!\!: \; a^\mathcal{N} (u_N (\mu),v_N ; \mu) = f^{\mathcal{N}} (v_N;\mu) \quad \forall v_N \in Y_N,
\end{align}
the spaces $X_N$ and $Y_N$ have to be chosen such that
\begin{equation}\label{RB:infsup}
	\inf_{w_N \in X_N} \sup_{v_N \in Y_N} 
		\frac{\vert a^\mathcal{N} (w_N,v_N;\mu) \vert }{\|w_N\|_{X^\cN} \|v_N\|_{Y^\cN}} 
		=: \beta_N^\cN (\mu) 
		\geq \beta_{\text{LB}}^\cN > 0,
		\qquad \mu\in\cP.
\end{equation}
Let $u_N ( \mu) = \sum_{i=1}^N u_{i,N} (\mu) \xi_i$ be the desired expansion of the RB approximation. It is easily seen that the unknown coefficient vector $\mathbf{u}_N (\mu) =(u_{i,N} (\mu) )_{i=1}^N$ arises from solving a linear system of equations $\mathbb{A}_N (\mu) \mathbf{u}_N (\mu) = \mathbb{F}_N (\mu)$, where $(\mathbb{A}_N (\mu))_{i,j}:=a^{\mathcal{N}}(\xi_i,\eta_j;\mu)$, $(\mathbb{F}_N (\mu))_{j}:=f^{\mathcal{N}} (\eta_j;\mu)$, and $Y_N := \text{span} \{ \eta_1 , \ldots , \eta_N \}$ is the reduced test space. Typically, $\mathbb{A}_N (\mu)$ is a dense matrix so that the reduced approximation can be computed with $\mathcal{O} (N^3)$ operations. This complexity is independent of the truth dimension $\cN$, which is the reason to call it \emph{online efficient}. 
In order to setup the linear system in an online efficient manner, it is usually assumed that sesquilinear and linear forms are separable w.r.t.\ the parameter, i.e,,
\begin{align}
	\label{eq:affinedecomp}
		a^{\mathcal{N}}(w,v;\mu) 
		&= \sum_{q=1}^{Q^a} \vartheta_q^a (\mu) a_q^{\mathcal{N}}(w, v), 
		\qquad && \mu \in \mathcal{P}, w \in X^{\mathcal{N}}, v \in Y^{\mathcal{N}}, \\
	f^{\mathcal{N}}(v;\mu) 
		& = \sum_{q=1}^{Q^f} \vartheta_q^f (\mu) f_q^{\mathcal{N}}(v),  
		\qquad && \mu \in \mathcal{P}, v \in Y^{\mathcal{N}}. 
\end{align}
Sometimes \eqref{eq:affinedecomp} is also called \emph{affine decomposition}. If \eqref{eq:affinedecomp} is not satisfied, the empirical interpolation method can be used to construct an affine approximation (see e.g. \cite{EIM}). Using \eqref{eq:affinedecomp}, one can precompute parameter-independent quantities in the offline stage allowing for an online efficient setup of the linear system. In fact, the parameter-independent matrices and vectors $(\mathbb{A}_N^q)_{j,i} := a_q^\mathcal{N} (\xi_i,\eta_j)$,  $i, j=1,\ldots,N, \ q=1,\ldots,Q^{a}$ and $(\mathbb{F}_N^q)_{j} := f_q^\mathcal{N} (\eta_j)$, $j=1,\ldots,N, \ q=1,\ldots,Q^{f}$, can be computed offline and stored once. Then, for a given new parameter $\mu\in\cP$
\begin{alignat*}{3}
	\mathbb{A}_N (\mu) = \sum_{q=1}^{Q^{a}}  \vartheta_q^{a} (\mu) \mathbb{A}_N^q, \qquad
	\mathbb{F}_N (\mu) = \sum_{q=1}^{Q^{f}}  \vartheta_q^{f} (\mu) \mathbb{F}_N^q,
\end{alignat*}
which is of complexity $\cO(Q^{a}N^2)$ and $\cO(Q^{f}N)$, respectively. As the complexity does not dependent on $\cN$,
it is \emph{online efficient}.

The best possible rate of convergence for the error is given by the decay of the \emph{Kolmogorov $N$-width}
\begin{equation}\label{eq:nwidth}
	d_N(\cP) := \inf_{\dim(X_N)=N, X_N\subset X} \sup_{\mu\in\cP} \inf_{v_N\in X_N} \| u(\mu)-v_N\|_X.
\end{equation}
It is known that $d_N(\cP)$ decays fast (even exponentially) for several PPDEs as $N\to\infty$ with smooth dependence of the solution on the parameter (see e.g. \cite{OR16}).

\subsection{The residual based a-posteriori error estimator}
As already mentioned above, an online efficient error estimator $\Delta_N(\mu)$ is often used within a weak greedy procedure to determine the snapshot index set $S_N$. Moreover, such a $\Delta_N(\mu)$ is used for online certification by computing an upper bound for the error induced by the RB approximation $u_N(\mu)$.  In this paper, we will consider two examples for such a $\Delta_N(\mu)$. For the subsequent analysis, we will consider
$$
	e_N(\mu) := \| u(\mu) - u_N(\mu)\|_{X},
		\qquad
	e^\cN_N(\mu) := \| u^\cN(\mu) - u_N(\mu)\|_{X^\cN}
$$
which will be termed \emph{exact error} and \emph{truth error}, respectively. Also other error quantities or functions of the error can be considered using adjoint methods. It is fairly standard to use the (truth) \emph{residual} $R_N^{\mathcal{N}}(\cdot;\mu)\in (Y^\cN)'$ defined as
$$
	R_N^{\mathcal{N}} (w ; \mu) := f^{\mathcal{N}} (w;\mu) - a^\mathcal{N} (u_N(\mu),w ; \mu) 
		= a^\mathcal{N} (e^\cN_N(\mu),w ; \mu),
		\quad w\in Y^\cN,
$$
to define the residual based a-posteriori RB error estimator as follows
\begin{align*}
	\Delta_N^{\text{Std}}(\mu) 
		:= \frac{\| R_N^{\mathcal{N}} (\cdot ; \mu) \|_{(Y^{\mathcal{N}})^\prime } }{\beta^{\mathcal{N}} (\mu) }, 
\end{align*}
which we will call \emph{standard RB error estimator} in the sequel. It should be noted that the (truth) residual also admits an affine decomposition and can thus in fact be computed online efficient. The involved (truth) $\inf$-$\sup$ constant $\beta^{\mathcal{N}} (\mu)$ can only be determined exactly in very specific cases. Usually, a lower bound $\beta_{\text{LB}}^{\mathcal{N}} (\mu)$ is computed for example by the \emph{Successive Constraint Method} (SCM), \cite{MaxwellSCM,HesthavenEFI,SCM}. However, even though the SCM is online efficient, the quantitative performance may be a severe problem in realtime applications, in particular if a good approximation of  $\beta^{\mathcal{N}} (\mu)$ is required (which is the case, e.g., if $\beta^{\mathcal{N}} (\mu)$ is small).

The relation of the truth error and the residual is well-known and easily seen
\begin{align}
\label{al:ResError}
	\frac{1}{\gamma^{\mathcal{N}} (\mu)} \| R_N^{\mathcal{N}} (\cdot ; \mu) \|_{(Y^{\mathcal{N}})^\prime } 
		\leq
		 \| e_N^{\mathcal{N}} (\mu) \|_{X^\mathcal{N}} 
		 \leq 
 	\frac{1}{\beta^{\mathcal{N}} (\mu)} \| R_N^{\mathcal{N}} (\cdot ; \mu) \|_{(Y^{\mathcal{N}})^\prime }. 
\end{align}
Note, that this relation is w.r.t.\ the truth error, not w.r.t.\ the exact error \cite{WaveletRB,Ohlberger:true,MR3431132,MR3318670,MR3460105}. Of course, one can replace $\beta^{\mathcal{N}} (\mu)$ and $\gamma^{\mathcal{N}} (\mu)$ in \eqref{al:ResError} by lower and upper bounds $\beta_{\text{LB}}>0$, $\gamma_{\text{UB}}<\infty$, respectively, even though these bounds may be numerically infeasible. Under the assumptions of the previous sections, it has been proven that weak greedy algorithms exhibit the same rate of convergence as $d_N(\mathcal{P})$ if there exists rigorous lower and upper bounds for the error, like \eqref{al:ResError}, see \cite{ConvergenceRB,MR2877366}. Roughly speaking the RBM works well for a PPDE if $d_N(\cP)$ decays sufficiently fast as $N$ grows.

\section{A Hierarchical Error Estimator}
\label{sec:hier_err_est}
In this section, we introduce the hierarchical error estimator. To this end, let $X_N \subsetneq X_M \subset X^{\mathcal{N}}$, where $\text{dim}(X_M) = M > N = \text{dim}(X_N)$, and $u_N(\mu) \in X_N$, $u_M(\mu) \in X_M$, respectively. Then, we define the hierarchical error estimator by
\begin{eqnarray}\label{eq:HierErrEst}
	\Delta_{N,M}(\mu) := \Vert u_{M}(\mu) - u_N(\mu) \Vert_{X^{\mathcal{N}}},
\end{eqnarray}

\subsection{Error Analysis}
\label{sec:ErrorAnalysis}

The analysis of hierarchical error estimators is pretty standard in various applications for ODEs or PDEs. Due to the specific framework of parameter-dependent problems, we detail it here. We indicate two approaches.

\subsubsection*{Asymptotic analysis} 
Using triangle inequality, we get by \eqref{al:ResError} and \eqref{RB:infsup}
\begin{eqnarray*}
	\Vert u^{\cN}(\mu) - u_N(\mu) \Vert_{X^{\mathcal{N}}}
	&\le& \Vert u^{\cN}(\mu) - u_M(\mu) \Vert_{X^{\mathcal{N}}}
		+ \Vert u_{M}(\mu) - u_N(\mu) \Vert_{X^{\mathcal{N}}} \\
	&=& \Vert u^{\cN}(\mu) - u_M(\mu) \Vert_{X^{\mathcal{N}}}
		+ \Delta_{N,M}(\mu) \\
	&\le& \frac{1}{\beta^{\mathcal{N}} (\mu)} \| R_M^{\mathcal{N}} (\cdot ; \mu) \|_{(Y^{\mathcal{N}})^\prime } 
		+ \Delta_{N,M}(\mu) 
	= \Delta_M^{\text{Std}}(\mu) + \Delta_{N,M}(\mu). 
\end{eqnarray*}
Now, we recall from \cite{ConvergenceRB} that one can construct $X_M$ in such a way that $\Delta_M^{\text{Std}}(\mu)\to 0$ as $M\to\infty$ for every $\mu \in \mathcal{P}$ provided that the Kolmogorov $M$-width decays, i.e., this is a term of higher order. This means that for any $N$ and $\varepsilon>0$, we can choose an $M=M(\varepsilon)>N$ such that
$$
	\Vert u^{\cN}(\mu) - u_N(\mu) \Vert_{X^{\mathcal{N}}} \le \varepsilon +  \Delta_{N,M}(\mu).
$$
Alternatively, we can choose $M$ such that $\Delta_M^{\text{Std}}(\mu)\le \varepsilon  \Delta_{N,M}(\mu)$ yielding that
$$
	\Vert u^{\cN}(\mu) - u_N(\mu) \Vert_{X^{\mathcal{N}}} \le  {(1+ \varepsilon)} \cdot \Delta_{N,M}(\mu).
$$
If, however, the assumption $\Delta_M^{\text{Std}}(\mu)\le \varepsilon  \Delta_{N,M}(\mu)$ is only satisfied on 
a training set $\mathcal{P}_{\text{train}} \subset \mathcal{P}$, there might exists parameters $\mu \in \mathcal{P}\setminus \mathcal{P}_{\text{train}}$ with $\Delta_{N,M}(\mu) = 0$ for all $M$, but $\Vert u^{\cN}(\mu) - u_N(\mu) \Vert_{X^{\mathcal{N}}} \neq 0$. This may happen if $X_M$ does not converge to $X^\cN$, which motivates a further assumption.

\subsubsection*{Saturation assumption}
A way to analyze hierarchical error estimates is by showing or assuming a guaranteed error decay, typically called \emph{saturation property}, see e.g.\ \cite{BankHier1,Huang2011,WohlmuthHier}. In order to formulate it, we recall that the reduced spaces $X_N := \text{span} \{  \xi_1 , \ldots, \xi_N \}$, $N \ll \mathcal{N}$ are formed by snapshots $\xi_i := u^{\mathcal{N}}(\mu_i)$, $i=1,\ldots,N$. Consider now a second reduced basis space $X_M$ with $\text{dim}(X_M) = M > N = \text{dim}(X_N)$. Then, we say that $X_N$ and $X_M$ satisfy the \emph{saturation property}, if there exists 
a constant $\Theta_{N,M}^{\mathcal{N}} \in (0,1)$, s.t.  
\begin{eqnarray}\label{eq:saturation}
	\Vert u^{\mathcal{N}}(\mu) - u_{M}(\mu) \Vert_{X^{\mathcal{N}}} 
	\leq \Theta_{N,M}^{\mathcal{N}} \cdot \Vert u^{\mathcal{N}}(\mu) - u_{N}(\mu) \Vert_{X^{\mathcal{N}}}
\end{eqnarray}
holds for all $\mu \in \mathcal{P}$. We will show a numerical procedure to validate this assumption below. At this point we do \emph{not} specify the particular construction of $X_M$, see \S \ref{sec:RBBasisGenNew} below. 
Then, following standard lines, we can easily prove the following estimates.
\begin{proposition}\label{thm:HierEst1}
	If \eqref{eq:saturation} holds, then 
	\begin{equation}\label{eq:HierErrEstBound}
		\frac{\Delta_{N,M}(\mu)}{1+\Theta_{N,M}^{\mathcal{N}}}  
		\le \Vert u^{\mathcal{N}}(\mu) - u_N(\mu) \Vert_{X^{\mathcal{N}}}
		\le \frac{\Delta_{N,M}(\mu)}{1-\Theta_{N,M}^{\mathcal{N}}} =: \Delta_{N,M}^\mathrm{Hier}(\mu).
	\end{equation}
\end{proposition}
\begin{proof} 
For $\mu \in \mathcal{P}$ with $\Vert u^{\mathcal{N}}(\mu) - u_N(\mu) \Vert_{X^{\mathcal{N}}} = 0$  the inequalities are obviously fulfilled. If $\Vert u^{\mathcal{N}}(\mu) - u_N(\mu) \Vert_{X^{\mathcal{N}}} \not= 0$, we use the reverse triangle inequality and the saturation assumption to obtain
\begin{align*}\label{eq:derivation_hier_err_est}
\frac{\Vert u_{M}(\mu) - u_N(\mu) \Vert_{X^{\mathcal{N}}}}{\Vert u^{\mathcal{N}}(\mu) - u_N(\mu) \Vert_{X^{\mathcal{N}}}} 
	&\geq \frac{ \Vert u^{\mathcal{N}}(\mu) - u_N(\mu) \Vert_{X^{\mathcal{N}}} - \Vert u^{\mathcal{N}}(\mu) - u_{M}(\mu) \Vert_{X^{\mathcal{N}}}}{\Vert u^{\mathcal{N}}(\mu) - u_N(\mu) \Vert_{X^{\mathcal{N}}}} \\
	&= 1 - \frac{\Vert u^{\mathcal{N}}(\mu) - u_{M}(\mu) \Vert_{X^{\mathcal{N}}}}{\Vert u^{\mathcal{N}}(\mu) - u_N(\mu) \Vert_{X^{\mathcal{N}}}} \geq 1 - \sup_{\mu \in \mathcal{P}}{\frac{\Vert u^{\mathcal{N}}(\mu) - u_{M}(\mu) \Vert_{X^{\mathcal{N}}}}{\Vert u^{\mathcal{N}}(\mu) - u_N(\mu) \Vert_{X^{\mathcal{N}}}}} \\
	&\geq 1 - \Theta_{N,M}^{\mathcal{N}}, 
\end{align*}
which proves the upper bound. The lower bound is proven by triangle inequality and saturation.
\end{proof}

\begin{remark}
	With a slight abuse of terminology, we sometimes call both $\Delta_{N,M}$ and $\Delta_{N,M}^\mathrm{Hier}$ ``hierarchical error estimator''. Strictly speaking, only $\Delta_{N,M}^\mathrm{Hier}$ is an  upper bound bound for the error, whereas $\Delta_{N,M}$ requires the multiplicative constant $(1 - \Theta_{N,M}^{\mathcal{N}})^{-1}$ in order to be an upper bound.
\end{remark}

For the effectivity 
\begin{align}\label{eq:effectivityfactor}
	\eta_{N,M}^{\mathcal{N}}(\mu) := \frac{\Delta_{N,M}(\mu)}{(1 - \Theta_{N,M}^{\mathcal{N}}) \Vert u^{\mathcal{N}}(\mu) - u_N(\mu) \Vert_{X^{\mathcal{N}}}}
\end{align}
we obviously get that
\begin{align}\label{eq:Sharpness}
	1 \leq \eta_{N,M}^{\mathcal{N}}(\mu) \leq \frac{1+\Theta_{N,M}^{\mathcal{N}}}{1 - \Theta_{N,M}^{\mathcal{N}}}.
\end{align}
The closer $\Theta_{N,M}^{\mathcal{N}}$ is to zero, the better is the effectivity.

\subsection{Realization}
The hierarchical error estimator can be computed online-efficient as we are going to show now. In fact, let
$$
	u_N(\mu) = \sum_{i=1}^N \alpha^N_i(\mu)\, \xi_i,
	\qquad
	u_M(\mu) = \sum_{i=1}^M \alpha^M_i(\mu)\, \xi_i,
$$
be the expansions of the reduced basis approximations (in general $\alpha^N_i(\mu)\not= \alpha^M_i(\mu)$ even for $1\le i\le N$). Then, setting $\alpha^N_i(\mu):=0$ for $i=N+1,\ldots ,M$, we get
\begin{eqnarray*}
	\Delta_{N,M}(\mu)^2
	&=& \left\| \sum_{i=1}^M (\alpha^N_i(\mu) - \alpha^M_i(\mu))\, \xi_i\right\|_{X^\cN}^2 \\
	&=& \sum_{i,j=1}^M (\alpha^N_i(\mu) - \alpha^M_i(\mu)) (\alpha^N_j(\mu) - \alpha^M_j(\mu))\, (\xi_i, \xi_j)_{X^\cN}.
\end{eqnarray*}
Since the values $(\xi_i, \xi_j)_{X^\cN}$ (the entries of the Gramian matrix) can be precomputed and stored in the offline stage, the computation of $\Delta_{N,M}(\mu)$ requires $\cO(M^2)$ operations independent of $\cN$, i.e., online efficient. Of course, we have the well-known \emph{square root effect}, since the above reasoning yields $\Delta_{N,M}(\mu)^2$ so that we loose half of the accuracy by taking the square root. This, however, is exactly the same for the standard estimator and there are suggestions how to deal with it
(see e.g. \cite{Buhr20144094}).

\subsection{Offline approximation of $\Theta_{N,M}^{\mathcal{N}}$}
\label{sec:ConstThetaNew}
The main challenges for using the hierarchical error estimator are (i) the choice of an appropriate $M$ and (ii) the determination of the multiplicative constant $\rho$ with $e^\cN_N(\mu) \le \rho \Delta_{N,M}(\mu)$ for all $\mu\in\cP$. Obviously, both issues are linked. In the case using the saturation assumption, we have that $\rho = (1-\Theta_{N,M}^{\mathcal{N}})^{-1}$, so that we start describing an offline procedure to approximate the saturation constant.

To this end, we use a result on nonlinear parametrized programming problems.
\begin{theorem} \cite{Dinkelbach}\label{thm:dinkelbach}
Let $\cP\subset \mathbb{R}^P$ be compact and connected, $f,g: \cP\to\R$ continuous such that $g(\mu) > 0$ for all $\mu\in\cP$. Setting $F(q):=  \max_{\mu\in\cP}{\{f(\mu) - q \cdot g(\mu)\}}$, $q\in \R$, it holds $q_0 := \max_{\mu\in\cP}{\frac{f(\mu)}{g(\mu)}}$ if and only if $F(q_0) = 0$.
\end{theorem}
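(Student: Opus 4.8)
The plan is to exploit the elementary structure of the auxiliary function $F$. First I would record two preliminaries. Since $\cP$ is compact and $\mu\mapsto f(\mu)-q\,g(\mu)$ is continuous for each fixed $q$, the maximum defining $F(q)$ is attained, so $F:\R\to\R$ is well defined; likewise $\mu\mapsto f(\mu)/g(\mu)$ is continuous on the compact set $\cP$ (here $g>0$ is used), so $q_0:=\max_{\mu\in\cP} f(\mu)/g(\mu)$ is well defined and attained at some $\bar\mu\in\cP$. Moreover $F$ is strictly decreasing: if $q_1<q_2$ and $\mu_2$ is a maximizer of $f-q_2 g$, then $F(q_1)\ge f(\mu_2)-q_1 g(\mu_2) = f(\mu_2)-q_2 g(\mu_2) + (q_2-q_1)g(\mu_2) > F(q_2)$, because $g(\mu_2)>0$. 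In particular $F$ has at most one zero.

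Next I would show $F(q_0)=0$. For the upper estimate, the definition of $q_0$ gives $f(\mu)/g(\mu)\le q_0$, hence $f(\mu)-q_0 g(\mu)\le 0$ for every $\mu\in\cP$ (again clearing the denominator uses $g(\mu)>0$), and taking the maximum over $\mu$ yields $F(q_0)\le 0$. For the lower estimate, evaluating the maximand at $\bar\mu$ gives $F(q_0)\ge f(\bar\mu)-q_0 g(\bar\mu)= g(\bar\mu)\bigl(f(\bar\mu)/g(\bar\mu)-q_0\bigr)=0$. Combining, $F(q_0)=0$; this is the substantive (``only if'') direction.

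Finally, for the converse, suppose $q\in\R$ satisfies $F(q)=0$. Since $F(q_0)=0$ has just been established and $F$ is strictly decreasing, hence injective, we conclude $q=q_0=\max_{\mu\in\cP} f(\mu)/g(\mu)$, completing the equivalence. I expect no genuine obstacle here; the only point needing care is the two-sided identity $F(q_0)=0$, where positivity of $g$ must be used both to pass between the fractional and the linearized inequalities and to force the linearized quantity to vanish exactly at the maximizer $\bar\mu$. It is worth noting that only compactness of $\cP$ (for attainment of the two maxima) is actually needed; connectedness plays no role in this argument and is merely inherited from the general fractional-programming setting, where it serves to locate a zero of $F$ via an intermediate-value argument — superfluous here since $q_0$ exhibits one explicitly.
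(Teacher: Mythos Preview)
Your argument is correct and is essentially the classical Dinkelbach argument. Note, however, that the paper does not actually prove this theorem: it is merely quoted from \cite{Dinkelbach} without proof, so there is no ``paper's own proof'' to compare against. Your observation that connectedness of $\cP$ is superfluous here is also valid; only compactness is used to guarantee that the two maxima are attained.
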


We apply this result for the functions $f(\mu) := \Vert u^{\mathcal{N}}(\mu) - u_M(\mu) \Vert_{X^{\mathcal{N}}}$ and $g(\mu) := \Vert u^{\mathcal{N}}(\mu) - u_N(\mu) \Vert_{X^{\mathcal{N}}}$. Due to the requirement $g(\mu) > 0$ for all $\mu\in\cP$, we decompose the parameter space $\mathcal{P}$ in compact subsets $\mathcal{P}_i$ in such a way, that on each subset the denominator is non-vanishing. In view of \eqref{al:ResError} this means here that $\Vert R_N^{\mathcal{N}}(\cdot ; \mu) \Vert_{(Y^{\mathcal{N}})'} \neq 0$. Then, we proceed as follows: for fixed dimension $N$ and for $\mathcal{P}_i$ we solve the nonlinear problem
\begin{align}\label{eq:rootfindingproblem}
\Theta_{N,M,i}^{\mathcal{N}} := \text{arg} \: \min_{q \in \mathbb{R}_{\geq 0}}{\vert F_i(q) \vert} \quad \text{with} \quad  F_i(q):= \max_{\mu \in \mathcal{P}_i}\{f(\mu) - q \cdot g(\mu)\}
\end{align}
and define $\Theta_{N,M}^{\mathcal{N}} := \max_{i}{\Theta_{N,M,i}^{\mathcal{N}}}$. For each $i$, we construct an iteration $\theta_i^{(k)}$, $k=0,1,2,\ldots$, for which we need a good starting value $\theta_i^{(0)}$. Since
\begin{align*}
	\frac{\beta^{\mathcal{N}}(\mu)}{\gamma^{\mathcal{N}}(\mu)} \cdot \frac{\| R_M^{\mathcal{N}} (\cdot ; \mu) \|_{(Y^{\mathcal{N}})^\prime }}{\| R_N^{\mathcal{N}} (\cdot ; \mu) \|_{(Y^{\mathcal{N}})^\prime }} 
	\leq \frac{\Vert u^{\mathcal{N}}(\mu) - u_M(\mu) \Vert_{X^{\mathcal{N}}}}{\Vert u^{\mathcal{N}}(\mu) - u_N(\mu) \Vert_{X^{\mathcal{N}}}} 
	\leq \frac{\gamma^{\mathcal{N}}(\mu)}{\beta^{\mathcal{N}}(\mu)} \cdot \frac{\| R_M^{\mathcal{N}} (\cdot ; \mu) \|_{(Y^{\mathcal{N}})^\prime }}{\| R_N^{\mathcal{N}} (\cdot ; \mu) \|_{(Y^{\mathcal{N}})^\prime }}, 
\end{align*}
we use the following approximation as initial guess 
\begin{align*}
	\Theta_{N,M,i}^{\mathcal{N}} 
	:= \max_{\mu \in \mathcal{P}_i}{\frac{\Vert u^{\mathcal{N}}(\mu) - u_M(\mu) \Vert_{X^{\mathcal{N}}}}{\Vert u^{\mathcal{N}}(\mu) - u_N(\mu) \Vert_{X^{\mathcal{N}}}}} 
	\approx \max_{\mu \in \mathcal{P}_i}{ \frac{\| R_M^{\mathcal{N}} (\cdot ; \mu) \|_{(Y^{\mathcal{N}})^\prime }}{\| R_N^{\mathcal{N}} (\cdot ; \mu) \|_{(Y^{\mathcal{N}})^\prime }}} 
		=: \theta_i^{(0)},
\end{align*}
which is reasonable provided that $\min\limits_{\mu \in \mathcal{P}_i}{\frac{\beta_{\text{LB}}^{\mathcal{N}}(\mu)}{\gamma_{\text{UB}}^{\mathcal{N}}(\mu)}} \approx \max\limits_{\mu \in \mathcal{P}_i}{\frac{\gamma_{\text{LB}}^{\mathcal{N}}(\mu)}{\beta_{\text{UB}}^{\mathcal{N}}(\mu)}}$. This results in the (offline) Algorithm \ref{algo:GetConstant}. If this algorithm terminates with some $\Theta_{N,M}^{\mathcal{N}}<1$, the saturation property is in fact valid.
\goodbreak
\begin{algorithm}[!htb]
\caption{Computing $\Theta_{N,M}^{\mathcal{N}}$}
\label{algo:GetConstant} 
\begin{algorithmic}[1]
\State{Choose $\texttt{tol}>0$, fix $N \in \mathbb{N}$, choose $L\in\N$ compact subsets $\mathcal{P}_i$, $1 \leq i \leq L$}
\For{$i = 1 : L$}
\State{$f(\mu) := \Vert u^{\mathcal{N}}(\mu) - u_M(\mu) \Vert_{X^{\mathcal{N}}}$, %
	$g(\mu) := \Vert u^{\mathcal{N}}(\mu) - u_N(\mu) \Vert_{X^{\mathcal{N}}}$}
\State{$F_i(q) := \max\limits_{\mu \in \mathcal{P}_i}\{f(\mu) - q \cdot g(\mu)\}$}
\State{$k:=0$}
\State{$\theta_i^{(0)} := \max\limits_{\mu \in \mathcal{P}_i}{ \frac{\| R_M^{\mathcal{N}} (\cdot ; \mu) \|_{(Y^{\mathcal{N}})^\prime }}{\| R_N^{\mathcal{N}} (\cdot ; \mu) \|_{(Y^{\mathcal{N}})^\prime }}}$}  \label{line:thetanull}
\While{$\vert F_i(\theta_i^{(k)}) \vert \geq \texttt{tol} $}
	\State {iteratie nonlinear problem $F_i(q) = 0$ $\leadsto \theta_i^{(k+1)}$}
	\State {$k\to k+1$}
\EndWhile
\State{$\Theta_{N,M,i}^{\mathcal{N}}:=\theta_i^{(k)}$}
\EndFor
\Return $\Theta_{N,M}^{\mathcal{N}} := \max\limits_{i = 1, \ldots , L}{\Theta_{N,M,i}^{\mathcal{N}}}$
\end{algorithmic}
\end{algorithm}

At least quantitatively, the following might be more efficient instead of line \ref{line:thetanull}:\nobreak
\begin{algorithmic}[1]
\makeatletter
\setcounter{ALG@line}{5}
\makeatother
\State{ $\mu_i^* := \text{arg}\max\limits_{\mu \in \mathcal{P}_i}{\frac{\Vert R_M^{\mathcal{N}}(\cdot; \mu) \Vert_{(Y^{\mathcal{N}})'}}{\Vert R_N^{\mathcal{N}}(\cdot; \mu) \Vert_{(Y^{\mathcal{N}})'}}}$, 
	$\theta_i^{(0)} := \frac{\Vert u^{\mathcal{N}}(\mu_i^*) - u_N^{(d)}(\mu_i^*) \Vert_{X^{\mathcal{N}}}}{\Vert u^{\mathcal{N}}(\mu_i^*) - u_N^{(0)}(\mu_i^*) \Vert_{X^{\mathcal{N}}}}$}
\end{algorithmic}

\subsection{Reduced Basis Generation}
\label{sec:RBBasisGenNew}
So far, we assumed that  $X_N$ and $X_M$ are given, e.g.\ by a strong greedy method in an offline phase without using the hierarchical error estimator. One could also think of using the hierarchical part $\Delta_{N,M}(\mu)$ for this purpose. This, however, is at least not straightforward since one needs \emph{both} $N$ and $M$ for the error estimator, where $M$ has to be sufficiently large from the beginning. It would be a straightforward approach to start with $N=1$, $M=2$ for some parameters $\mu_1\not=\mu_2$. Maximizing $\Delta_{1,2}(\mu)$ over a training set would yield $\mu_3$ and we would set $N=2$, $M=3$, $S_3=\{ \mu_1,\mu_2,\mu_3\}$, etc. However, it can relatively easy be seen that this approach does not necessarily converge as snapshots may be selected repeatedly. Hence, we suggest a different approach.

Starting with $X_N$, the saturation property \eqref{eq:saturation} is always valid as long as the Kolmogorov $N$-width decays and the reduced basis has been constructed with a weak greedy algorithm. However, this only means that for each RB space $X_N$ there exists an appropriate RB space $X_M$, s.t.\ \eqref{eq:saturation}  is satisfied -- one is left with the question how to construct such a space $X_M$. We suggest to use the Taylor-RB method. If the solution $u(\mu)$ depends smoothly on the parameter $\mu$, we can add derivatives of the snapshots w.r.t.\  the respective parameter to the basis, i.e., for $X_N=\text{span}\{ u(\mu_1), \ldots , u(\mu_N)\}$ we set
$$
	X_M \kern-1pt:= \kern-1pt\text{span}\bigg\{  
		u(\mu_n),  
		 \frac{\partial^k}{\partial \mu_i^k} u(\mu_n):\, k=1,\ldots , K_n,\, i=1,\ldots, P,\, n=1,\ldots, N  
		 \bigg\}
$$
for appropriately chosen $K_n\in\N_0$. This means that $M = \sum_{n=1}^N (1+K_n\cdot P)$. 
It is well-known that these \emph{Taylor snapshots} $u^{(k)}_i(\mu):=\frac{\partial^k}{\partial \mu_i^k} u(\mu)$ can easily be computed recursively by solving the following linear variational problem (see e.g.\ \cite{QuarteroniRB})
\begin{align}\label{eq:ModifiedRBDerivatives} 
	a(u^{(k)}_i(\mu) , v ; \mu) 
	= \frac{\partial^k}{\partial \mu_i^k} f(v;\mu) 
	- \sum_{m=1}^k{\begin{pmatrix} k \\ m \end{pmatrix} \frac{\partial^m }{\partial \mu_i^m}
	a( u^{(k-m)}_i(\mu),v;\mu )}.
\end{align}
In general, the partial derivatives appearing in \eqref{eq:ModifiedRBDerivatives} are G\^ateaux derivatives. However, if the affine decomposition \eqref{eq:affinedecomp} holds, one just needs the derivatives of the involved functions $\theta_q^a, \theta_{q'}^{f}\!: \mathcal{P} \rightarrow \mathbb{R}$ in the classical sense. In this case one can ensure by standard arguments that for each $N$  there exists some  $M>N$, s.t.\ the results of \S\ref{sec:ErrorAnalysis} hold, provided that the solution is real-analytic with respect to $\mu$. Finally, for stability reasons we orthonormalize the Taylor snapshots by a POD. The corresponding method is summarized in Algorithm \ref{algo:WeakGreedyHi}.

\begin{algorithm}
\caption{(Weak) Greedy with Hierarchical Error Estimator}
\label{algo:WeakGreedyHi}
\begin{algorithmic}[1] 
	\State{Choose $\text{tol}>0$, $N_{\max}$, $\mathcal{P}_{\text{train}} \subset \mathcal{P}$, $\mu_1\in\cP$}
	\State{ $S_1 := \{\mu_1\}$,  
		$\Xi_1^{(0)} := \{\xi_1 := u^{\mathcal{N}}(\mu_1) \}$}
	\For{$N = 1, \ldots , N_{\text{max}}$}
		\State{$k = 1$}
	\Repeat\label{line:5}
		\State {$\check\Xi_{N}^{(k)} := \{ u^{(k)}_i(\mu_N):\, i=1,\ldots, P\}$ computed by \eqref{eq:ModifiedRBDerivatives}}
		\State{$\Xi_{N}^{(k)} := \text{ORTHONORMALIZE}(\Xi_N^{(k-1)}, \check\Xi_{N}^{(k)})$}
		\State{Set $X_N := \text{span} (S_N)$, $X_M := \text{span}(\Xi_N^{(k)})$
			 compute $\Theta_{N,M}^{\mathcal{N}}$ by Algorithm \ref{algo:GetConstant}} \label{line:ComputeTheta1}
		\State{$k \leftarrow k + 1$}
	\Until{$\Theta_{N,M}^{\mathcal{N}} < 1$}\label{line:10}
	\State $K_N:=k$
	\If{$\max_{\mu \in \mathcal{P}_{\text{train}}}{\Delta_{N,M}(\mu)} < \text{tol}$}
		\State STOP
	\Else
		\State{$\mu_{N+1} := \text{arg} \max\limits_{\mu \in \mathcal{P}_{\text{train}}}{\Delta_{N,M}(\mu)}$}
		\State{$S_{N+1} := S_N\cup \{\mu_{N+1}\}$, $\Xi_{N+1}^{(0)} := \Xi_N^{(0)} \cup \{ \xi_{N+1} := u^{\mathcal{N}}(\mu_{N+1}) \}$}
	\EndIf
	\EndFor
	\Return $S_N$, $X_N:= \text{span} (S_N)$ and $X_M := \text{span}(\Xi_N^{(K)})$, $\Theta_{N,M}^{\mathcal{N}}$
\end{algorithmic}
\end{algorithm} 

\begin{remark}
It can be expected (and we have indeed confirmed this by several numerical experiments) that the saturation property \eqref{eq:saturation} can be realized by decomposing the parameter space similar to \cite{MR2746617} (there called ``hp-RBM''). In addition to Algorithm \ref{algo:WeakGreedyHi} we have realized such an hp-RBM approach by modifying lines \ref{line:5} to \ref{line:10}. We observed fast convergence.
\end{remark}

\section{Numerical Results}
\label{sec:NumericalResults}
We investigate the quantitative performance of the RB hierarchical error estimator and focus on the sharpness and asymptotically correctness of \eqref{eq:HierErrEst}. In particular, we want to investigate
\begin{enumerate}
	\item How is the performance of $\Delta_{N,M}^\mathrm{Hier}$ as compared to $\Delta_N^\mathrm{Std}$?
	\item How does the performance depend on the availability of a sharp lower $\inf$-$\sup$ bound?
	\item Since $\Delta_{N,M}$ is an upper bound for the error up to some multiplicative constant depending on $M$, what is a reasonable choice for that constant?
	\item What is a good choice for $X_M$?
\end{enumerate}
For that purpose, we report on experiments for two test problems. All experiments have been performed on iMac 2009 equipped with an Intel Core 2 Duo 3.06 GHz processor and 8 GB 1067 MHz DDR3 RAM.

The first example, the so-called `thermal block' from \cite{Patera}, is a well-known benchmark problem for the RBM. In this case, the behavior of the $\inf$-$\sup$/coercivity constant is known and the performance of the SCM is very good such that $\Delta_N^\mathrm{Std}$ is expected to yield good results. We expect that $\Delta_{N,M}^\mathrm{Hier}$ should be less sharp for general $X_M$ and we are particularly interested in a quantitative comparison. The second example is the Helmholtz problem which has also been investigated in the RB-context in \cite{Haasdonk:RB}. In this case, it is known that the $\inf$-$\sup$ constant has a poor behavior for large parameters \cite{Melenk} and -moreover- the computation of a decent approximation using the SCM is quite costly. Hence, this should be a good benchmark test for the hierarchical error estimator. 

For the basis generation, we use both the strong and the weak greedy algorithm based upon $\Delta_{N,M}$ and $\Delta_N^\mathrm{Std}$ w.r.t.\ the same training set $\mathcal{P}_{\mathrm{train}}$. For $\Delta_{N,M}^\mathrm{Hier}$, we compare constructions of $X_M$ using a Taylor and a Lagrange basis. 

\begin{remark}
\begin{enumerate}
\item For simplicity we compute $\Theta_{M,N}$ over a training set, i.e.
\begin{align*}
	\Theta_{N,M}^{\mathcal{N}, \mathrm{train}} 
	&:= \max_{\mu \in \mathcal{P}_{\mathrm{train}}} \frac{\Vert u^{\mathcal{N}}(\mu) - u_{M}(\mu) \Vert_{X^{\mathcal{N}}}}{\Vert u^{\mathcal{N}}(\mu) - u_N(\mu) \Vert_{X^{\mathcal{N}}}},
\end{align*}
instead of solving the nonlinear problem \eqref{eq:rootfindingproblem}.
\item Although all problems considered here are stationary,  the hierarchical error estimator can also be applied to instationary problems e.g.\ by using a space-time formulation, \cite{SpaceTimeSchwabStevenson,SpaceTimeUrbanPatera}. 
\end{enumerate}
\end{remark}


\subsection{Thermal-Block (see \cite{Patera})}
\label{sec:ThermalBlock}
Let $\Omega := (0,1)^2$, divided into $B_1 \times B_2$ rectangular subblocks $\Omega_i \subset \Omega$, s.t. $\overline{\Omega} = \bigcup_{i=1}^{B_1B_2}{\overline{\Omega_i}}$. Let $ \mu \in \mathcal{P} \subset \mathbb{R}^2$ and $\alpha(x;\mu) := \mu_j\, \chi_{\Omega_i}(x)$ for $j \in \{1,2\}$, $1 \leq i \leq B_1\cdot B_2$, $\mu = (\mu_1,\mu_2) \in \mathcal{P}$, where $j=1$ if and only if $i$ is odd. We consider stationary heat conduction 
\begin{align*}\label{eq:ThermalBlock}
	\begin{split}
	- \nabla \cdot \left(\alpha(x;\mu) \; \nabla u(x;\mu) \right) 
		&= 0, \qquad \qquad \quad x \in \Omega, \\
	u(x;\mu) 
		&= 0, \qquad \qquad \quad x \in \Gamma_D := \{(x,1)^T \in \mathbb{R}^2 : 0 \leq x \leq 1\}, \\
	\alpha(x;\mu) \frac{\partial u}{\partial n}(x) 
		&= g_N(x;\mu), \qquad x \in \Gamma_N := \partial \Omega \backslash \Gamma_D.
	\end{split}
\end{align*}
Here, we choose $B_1 = B_2 = 3$ (see figure below) and set
\begin{align*}
	g_N(x;\mu) := \begin{cases} 
		1, \qquad \mathrm{on} \ \{(x,0)^T \in \mathbb{R}^2 : 0 \leq x \leq 1\},  \\ 
		0, \qquad \mathrm{on} \ \{(0,y)^T \in \mathbb{R}^2 : 0 \leq y \leq 1\} \cup \{(1,y)^T \in \mathbb{R}^2 : 0 \leq y \leq 1\}.
		\end{cases}
\end{align*}
Then, we have a coercive problem with identical trial and test space $X=Y:=H_D^1(\Omega) := \{ v \in H^1(\Omega) : v \vert_{\Gamma_D} = 0\}$ as well as bilnear and linear forms defined as\newline
\begin{minipage}{0.55\textwidth}
\begin{align*}
a ( u ,v;\mu) = & \sum_{i=1}^{\lceil B_1B_2 \rceil/2} {\color{red} \mu_1} \int_{\Omega_{2i-1}} \nabla u\cdot \nabla v ~ dx \\
& + \sum_{i=1}^{\lceil B_1B_2 \rceil / 2 - 1} {\color{blue} \mu_2} \int_{\Omega_{2i}} \nabla u\cdot \nabla v ~ dx,  \\
f(v;\mu) = & \int_{\Gamma_N} v \; dx.
\end{align*}
\end{minipage}
\begin{minipage}{0.39\textwidth}
\begin{tikzpicture}
\draw (0,0) -- (3,0);
\draw (0,0) -- (0,3);
\draw (3,0) -- (3,3);
\draw (0,3) -- (3,3);

\draw (0,1) -- (3,1);
\draw (0,2) -- (3,2);

\draw (1,0) -- (1,3);
\draw (2,0) -- (2,3);

\draw node at (0.5,0.5) {\color{red}$\Omega_1$};
\draw node at (0.5,1.5) {\color{blue}$\Omega_2$};
\draw node at (0.5,2.5) {\color{red}$\Omega_3$};

\draw node at (1.5,0.5) {\color{blue}$\Omega_4$};
\draw node at (1.5,1.5) {\color{red}$\Omega_5$};
\draw node at (1.5,2.5) {\color{blue}$\Omega_6$};

\draw node at (2.5,0.5) {\color{red}$\Omega_7$};
\draw node at (2.5,1.5) {\color{blue}$\Omega_8$};
\draw node at (2.5,2.5) {\color{red}$\Omega_9$};

\draw node at (-0.35,1.5) {$\Gamma_N$};
\draw node at (3.35,1.5) {$\Gamma_N$};
\draw node at (1.5,-0.25) {$\Gamma_N$};
\draw node at (1.5,3.25) {$\Gamma_D$};

\end{tikzpicture}
\end{minipage}

For the truth discretization, we used piecewise linear finite elements  with a total number of 11.881 degrees of freedom. Further, we choose two different parameter spaces, namely 
$$
	\mathcal{P}^{(1)} = [0.5,1]^2, 
	\qquad
	\mathcal{P}^{(2)} = [0.02,1]^2,
	\qquad
	|\mathcal{P}_{\mathrm{train}}^{(1)}| 
	= |\mathcal{P}_{\mathrm{train}}^{(2)}| = 10.201. 
$$
For the error plots, the discrete coercivity constant  (replacing the $\inf$-$\sup$ constant) was determined as the smallest eigenvalue of a generalized eigenvalue problem. For the online CPU-time for computing $\Delta_N^\mathrm{Std}$, we used the SCM.

For the thermal block problem, the solution depends only mildly on the parameter. Hence, the SCM converges after only $3$  steps to numerical precision, even on the larger parameter space $\mathcal{P}^{(2)}$. Therefore, we expect that $\Delta_N^\mathrm{Std}$ is quite sharp, which is confirmed by our experiments.  
Starting with the smaller parameter set $\mathcal{P}^{(1)}$, we also found $\Delta_{N,M}^\mathrm{Hier}$ to be quite sharp even for $M=N+1$. We omit the corresponding figures since $\Delta_N^\mathrm{Std}$ and $\Delta_{N,N+1}^\mathrm{Hier}$ turned out to be almost indistinguishable. Hence, we consider the larger parameter set $\mathcal{P}^{(2)}\supset \mathcal{P}^{(1)}$. The results are displayed in Figure \ref{fig:Thermal_M2_strong} using the strong greedy
and in Figure \ref{fig:Thermal_M2_weak} for the weak greedy with $\Delta_N^\mathrm{Std}$ for the sampling. We do not see a significant difference between the different sampling methods to create the reduced basis spaces. In addition, we also did the parameter sampling by the hierarchical error estimator. We omit the corresponding figures since the results are quite similar to Figures \ref{fig:Thermal_M2_strong} and \ref{fig:Thermal_M2_weak}.

In both figures, we use 100 test parameters and plot the true error in red solid lines. The dashed blue lines correspond to the average value of $\Delta_N^\mathrm{Std}(\mu)$ for these 100 test parameters. Finally, the dotted black lines indicate the average values of $\Delta_{N,M}^\mathrm{Hier}$ for $M\in\{ N+1, N+2\}$ using a Taylor-based construction with $K_n=1$ and $K_n=2$, respectively. We see a significant improvement for $M=N+2$ and almost no difference to $\Delta_N^\mathrm{Std}$.

In the tables next to the figures, we monitor the constants $\Theta_{N,M}$ for both choices. As expected, the value $\Theta_{N,M}$ significantly improves for $M=N+2$. 
However, in all cases the constant is below $1$ and we can easily deduce online heuristics. 
\begin{figure}[!htb]
	\caption{\label{fig:Thermal_M2_strong}Thermal-Block, 
		$\mathcal{P}^{(2)} = [0.02,1]^2$, 
		strong greedy sampling.
		Average error over test set of parameters. 
		Red, solid: true error; blue, dashed: residual error estimator; 
		black, dotted: hierarchical error estimator, $M\in\{N+1,N+2\}$.}
	\subfigure[Strong greedy, $M = N + 1$.]{
		\begin{tikzpicture}[scale=0.45]
		\begin{axis}[
			width=0.75\textwidth,
			xlabel={$N$},
			ymode=log,
			cycle list name=color list,
			unbounded coords=jump,
			legend pos=outer north east,
			legend cell align=left,
			legend style={draw=none,font=\tiny},
			clip=false,xtick=data,
			y=0.37cm, 
			]
			\addplot+[line width=1.5pt] table[y=err] 
				{./ThermalBlock_Basisgen_Strong_Strong_M=N+1.txt};
			\addplot+[line width=1.5pt,dashed] table[y=std] 
				{./ThermalBlock_Basisgen_Strong_Strong_M=N+1.txt};
			\addplot+[line width=1.5pt,dotted] table[y=hier] 
				{./ThermalBlock_Basisgen_Strong_Strong_M=N+1.txt};
		\end{axis}
		\end{tikzpicture}
		\label{fig:Thermal_M2_strong_N+1}
	}
	\scriptsize{
	\begin{tabular}[b]{cc}
	\toprule
	$N$ & $\Theta_{N,N+1}$ \\ 
	\midrule
	$1$ & $0.9736$ \\ 
	$2$ & $0.9156$ \\ 
	$3$ & $0.9677$ \\ 
	$4$ & $0.9141$ \\ 
	$5$ & $0.2970$ \\ 
	$6$ & $0.1716$ \\ 
	$7$ & $0.8133$ \\ 
	$8$ & $0.6927$ \\ 
	$9$ & $0.8543$ \\ 
	$10$ & $0.2969$ \\ 
	\bottomrule
	\end{tabular} 
	}
	\subfigure[Strong greedy, $M = N + 2$.]{
		\begin{tikzpicture}[scale=0.45]
		\begin{axis}[
			width=0.75\textwidth,
			xlabel={$N$},
			ymode=log,
			cycle list name=color list,
			unbounded coords=jump,
			legend pos=outer north east,
			legend cell align=left,
			legend style={draw=none,font=\tiny},
			clip=false,xtick=data,
			y=0.45cm, 
			]
			\addplot+[line width=1.5pt] table[y=err] 
				{./ThermalBlock_Basisgen_Strong_Strong_M=N+2.txt};
			\addplot+[line width=1.5pt,dashed] table[y=std] 
				{./ThermalBlock_Basisgen_Strong_Strong_M=N+2.txt};
			\addplot+[line width=1.5pt,dotted] table[y=hier] 
				{./ThermalBlock_Basisgen_Strong_Strong_M=N+2.txt};
		\end{axis}
		\end{tikzpicture}
		\label{fig:Thermal_M2_strong_N+2}
		}
	\scriptsize{
	\begin{tabular}[b]{cc}
	\toprule
	$N$ & $\Theta_{N,N+2}$ \\ 
	\midrule
	$1$ & $0.5987$ \\ 
	$2$ & $0.4890$ \\ 
	$3$ & $0.6214$ \\ 
	$4$ & $0.2715$ \\ 
	$5$ & $0.0186$ \\ 
	$6$ & $0.1382$ \\ 
	$7$ & $0.1693$ \\ 
	$8$ & $0.1413$ \\ 
	$9$ & $0.1031$ \\ 
	$10$ & $0.0133$ \\ 
	\bottomrule
	\end{tabular} 
	}
\end{figure}
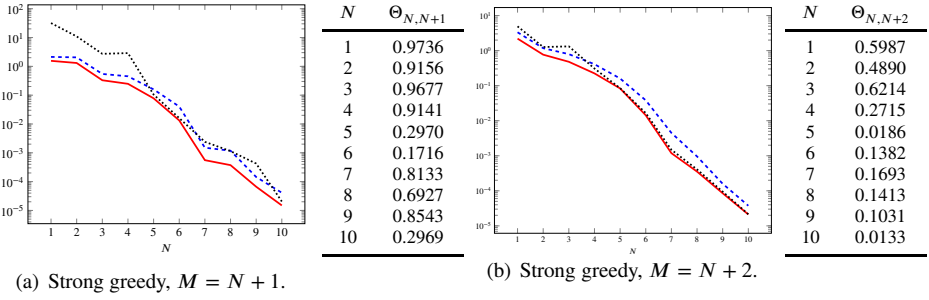 
%
%
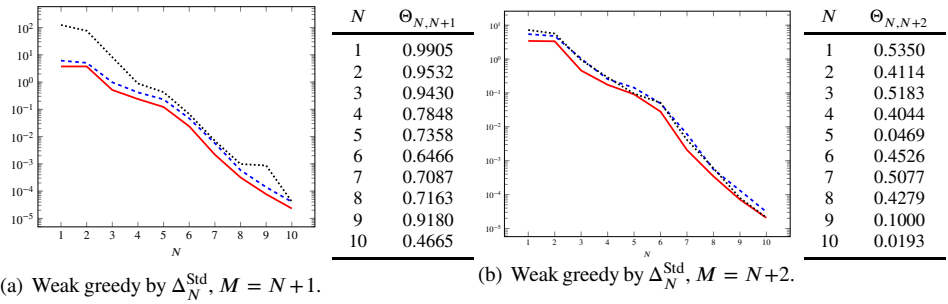
\begin{figure}[!htb]
	\caption{\label{fig:Thermal_M2_weak}Thermal-Block, 
	$\mathcal{P}^{(2)} = [0.02,1]^2$, 
	weak greedy with standard error estimator.
		Average error over test set of parameters. 
		Red, solid: true error; blue, dashed: residual error estimator; 
		black, dotted: hierarchical error estimator, $M\in\{ N+1, N+2\}$.}
	\subfigure[Weak greedy by $\Delta_N^{\text{Std}}$, $M = N + 1$.]{
		\begin{tikzpicture}[scale=0.45]
		\begin{axis}[
			width=0.75\textwidth,
			xlabel={$N$},
			ymode=log,
			cycle list name=color list,
			unbounded coords=jump,
			legend pos=outer north east,
			legend cell align=left,
			legend style={draw=none,font=\tiny},
			clip=false,xtick=data,
			y=0.35cm, 
			]
			\addplot+[line width=1.5pt] table[y=err] 
				{./ThermalBlock_Basisgen_Std_Weak_M=N+1.txt};
			\addplot+[line width=1.5pt,dashed] table[y=std] 
				{./ThermalBlock_Basisgen_Std_Weak_M=N+1.txt};
			\addplot+[line width=1.5pt,dotted] table[y=hier] 
				{./ThermalBlock_Basisgen_Std_Weak_M=N+1.txt};
		\end{axis}
		\end{tikzpicture}
		\label{fig:Thermal_M2_weak_N+1}
	}
	\scriptsize{
	\begin{tabular}[b]{cc}
	\toprule
	$N$ & $\Theta_{N,N+1}$ \\ 
	\midrule
	$1$ & $0.9905$ \\ 
	$2$ & $0.9532$ \\ 
	$3$ & $0.9430$ \\ 
	$4$ & $0.7848$ \\ 
	$5$ & $0.7358$ \\ 
	$6$ & $0.6466$ \\ 
	$7$ & $0.7087$ \\ 
	$8$ & $0.7163$ \\ 
	$9$ & $0.9180$ \\ 
	$10$ & $0.4665$ \\ 
	\bottomrule
	\end{tabular} 
	}
	\subfigure[Weak greedy by $\Delta_N^{\text{Std}}$, $M = N + 2$.]{
		\begin{tikzpicture}[scale=0.45]
		\begin{axis}[
			width=0.77\textwidth,
			xlabel={$N$},
			ymode=log,
			cycle list name=color list,
			unbounded coords=jump,
			legend pos=outer north east,
			legend cell align=left,
			legend style={draw=none,font=\tiny},
			clip=false,xtick=data,
			y=0.435cm, 
			]
			\addplot+[line width=1.5pt] table[y=err] 
				{./ThermalBlock_Basisgen_Std_Weak_M=N+2.txt};
			\addplot+[line width=1.5pt,dashed] table[y=std] 
				{./ThermalBlock_Basisgen_Std_Weak_M=N+2.txt};
			\addplot+[line width=1.5pt,dotted] table[y=hier] 
				{./ThermalBlock_Basisgen_Std_Weak_M=N+2.txt};
		\end{axis}
		\end{tikzpicture}
		\label{fig:Thermal_M2_weak_N+2}
		}
	\scriptsize{
	\begin{tabular}[b]{cc}
	\toprule
	$N$ & $\Theta_{N,N+2}$ \\ 
	\midrule
	$1$ & $0.5350$ \\ 
	$2$ & $0.4114$ \\ 
	$3$ & $0.5183$ \\ 
	$4$ & $0.4044$ \\ 
	$5$ & $0.0469$ \\ 
	$6$ & $0.4526$ \\ 
	$7$ & $0.5077$ \\ 
	$8$ & $0.4279$ \\ 
	$9$ & $0.1000$ \\ 
	$10$ & $0.0193$ \\ 
	\bottomrule
	\end{tabular} 
	}
\end{figure}

\textbf{Online effectivity.} As we have seen that both $\Delta_N^\mathrm{Std}$ and $\Delta_{N,M}^\mathrm{Hier}$ (for appropriate values of $M$) are sharp, we investigate the online CPU time required to compute these error estimators. In order to do so, we consider the obtained effectivity $\eta$, i.e., the ratio of error estimator and true error for 100 test parameters. The results are shown in Figure \ref{fig:thermalblock_scatter}, where the values of $\eta$ are plotted over the required online time. The circles correspond to $\Delta_{N,M}^\mathrm{Hier}$ for different values of $M$. The few circles with $\eta>5$ correspond to quite small values of $M$ and large parameter sets. All remaining values cluster for effectivites below 2 and online CPU times of less than 0.1 seconds. As we can also see, the online CPU time is more or less independent of the choice of $M$. 
This is compared to $\Delta_N^\mathrm{Std}$. The online timings include also the SCM in this case.  The crosses in Figure \ref{fig:thermalblock_scatter} confirm the sharpness of the standard error estimator, but at the expense of CPU times which are about 15 times larger than for the hierarchical case.
\begin{figure}[!htb]
	\centering
		\begin{tikzpicture}[scale=0.8]
		\begin{axis}[
			width=0.99\textwidth,
			xlabel={time [s]},
			ylabel=$\eta$,
			y=0.12cm,
			]
			\addplot[only marks, mark=x,black, 
				mark options={mark size=3}] table[x=time,y=eta] 
				{./Basisgen_Strong_scm_1000_random_sampling_points.txt};
			\addplot[only marks, mark=o,black, 
				mark options={mark size=3}] table[x=time,y=eta] 
				{./Basisgen_Strong_N=1-9_M=N+3_1000_random_sampling_points.txt};
			\addplot[only marks, mark=o,black, 
				mark options={mark size=3}] table[x=time,y=eta] 
				{./Basisgen_Strong_N=1-10_M=N+2_1000_random_sampling_points.txt};
			\addplot[only marks, mark=o,black, 
			mark options={mark size=3}] table[x=time,y=eta] 
			{./Basisgen_Strong_N=1-11_M=N+1_1000_random_sampling_points.txt};			
		\end{axis}
		\end{tikzpicture}
		\caption{\label{fig:thermalblock_scatter} Online effectivity index $\eta$ over online CPU-time for thermal block on $\mathcal{P}^{(2)}$, strong greedy. Circles: Hierarchical error estimator for $M=N+1$, $M=N+2$ and $M=N+3$; crosses: Standard error estimator.}
\end{figure}
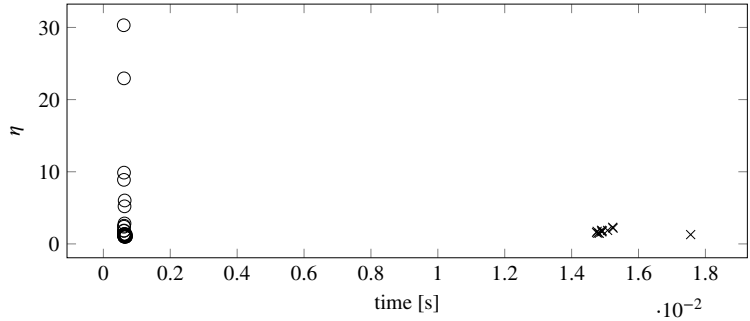



\subsection{Helmholtz Problem}
\label{sec:Helmholtz}
The Helmholtz equation arises from the time-dependent wave equation in the time-harmonic case, see e.g.\ \cite{Babuska1,Melenk,Babuska3,Babuska2} and references therein. Let $\Omega \subset \mathbb{R}^n$, $n \in \{1,2,3\}$, be a bounded Lipschitz domain with boundary $\Gamma := \partial \Omega$. For $\mu \in \mathcal{P} := [\mu_{\text{min}},\mu_{\text{max}}] \subset \mathbb{R}$ with $1 \leq \mu_{\text{min}} < \mu_{\text{max}} < \infty$, the Helmholtz problem reads
\begin{alignat}{3}
	\label{al:helmholtz}
	 -\Delta u(x) - \mu^2 u(x) 
	 	&= r(x), \qquad && x \in \Omega, \\
 	u (x) &= 0, \qquad && x \in \Gamma_D \subset \Gamma, \notag\\
 	\frac{\partial u}{\partial n}(x) + \mathbbit{i} \mu u(x) 
		&= g(x), \qquad && x \in \Gamma_R \subset \Gamma, \notag
\end{alignat}
where $\Gamma_D \cup \Gamma_R = \Gamma$. The parameter $\mu \in \mathcal{P}$ denotes the \emph{wavenumber}, defined by $\mu := \frac{\omega}{c}$ (SI unit: $m^{-1}$), where $\omega \in \mathbb{R}$ denotes the frequency and $c \in \mathbb{R}$ the wave propagation speed, $\mathbbit{i}:=\sqrt{-1}$. In high frequency problems the wavenumber is quite large resulting in   oscillations, see \cite{Melenk}. We use $\mu_{\text{max}} = 100$ here, since this suffices to show the desired effects. 
Test and trial spaces are again identical, $X=Y:= H_D^1(\Omega; \C) := \{ v \in H^1(\Omega; \C) : v \vert_{\Gamma_D} = 0\}$, but the sesquilinear form is no longer hermitean, i.e., 
\begin{align*}
	a ( u ,v;\mu) & = \int_{\Omega} \nabla u\cdot \overline{\nabla v} ~ dx 
		- \mu^2 \int_{\Omega} u \bar{v} ~ dx 
		+ \mathbbit{i} \mu \int_{\Gamma_R} u \bar{v} ~ ds, \\
	f ( v ; \mu ) & = \int_{\Omega} r \bar{v} ~ dx + \int_{\Gamma_R} g \bar{v} ~ dx.
\end{align*}
The affine decomposition in the form \eqref{eq:affinedecomp} is clear. Such problems are usually analyzed using the parameter-dependent norm given by
\begin{align*}
	\| v \|_{1,\mu}^2 := \mu^2 \| v \|^2_{0} + | v |_{1}^2,\qquad v\in H^1(\Omega;\C),
\end{align*}
which is equivalent to $\|\cdot\|_{1}$, i.e.,  
$\min\{ 1, \mu_{\min}\} \| v \|_{1}  \leq \| v \|_{1,\mu} \leq \max\{1, \mu_{\max}\} \| v \|_{1}$, $v \in H^1 (\Omega;\C)$, with coefficients, which depend on the parameter range, however. The well-posed\-ness is proven e.g.\ in \cite{Melenk} by the Fredholm alternative. Moreover, there exists a constant $C_{\text{$\inf$-$\sup$}}>0$ such that
\begin{align}\label{eq:Helmholtz-inf-sup}
	 \inf_{w \in X} \sup_{v \in Y} \frac{|a(w,v;\mu)|}{\| w \|_{1,\mu} \| v \|_{1,\mu}} 
	 &\geq \inf_{w \in X} \sup_{v \in Y} \frac{ \operatorname{Re}\{a(w,v;\mu)\}}{\| w \|_{1,\mu} \| v \|_{1,\mu}} \geq C_{\text{$\inf$-$\sup$}} \; \mu^{-\frac{7}{2}}.
\end{align}

For our numerical experiments we consider three cases of parameter spaces, namely 
$$
	\mathcal{P}^{(1)} = [1,5], \,\,
	\mathcal{P}^{(2)} = [95,100],\,\,
	\mathcal{P}^{(3)} = [90,100],
	\quad
	|\mathcal{P}_{\mathrm{train}}^{(i)}| = 10^4+1,\, i=1,2,3.
	$$ 
Thus, $\mathcal{P}^{(1)}$ is in the low-frequency domain so that the $\inf$-$\sup$ constant is expected to be moderate, whereas $\mathcal{P}^{(2)}$, $\mathcal{P}^{(3)}$ will lead to oscillatory, high-frequency solutions. The latter choices allow to investigate the dependency on the \emph{size} of the parameter set within the high-frequency regime. 
Our truth discretization is formed by spectral elements of degree 6 with 600 degrees of freedom for $\mathcal{P}^{(1)}$ (which turned out to be sufficient) and spectral elements of degree 16 with 16.000 degrees of freedom for $\mathcal{P}^{(2)}$ and $\mathcal{P}^{(3)}$. 

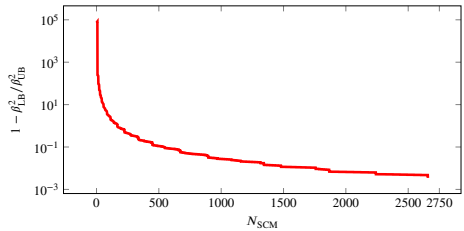
\begin{wrapfigure}[10]{r}{0.51\textwidth} 
		\begin{tikzpicture}[scale=0.49]
		\begin{axis}[
			width=0.95\textwidth,
			xlabel={$N_{\mathrm{SCM}}$},
			xtick={0,500,1000,1500,2000,2500,2750},
			ticklabel style={
    			/pgf/number format/.cd,
	    		use comma,
			1000 sep = {}
			  },
			ylabel={$1-{\beta_{\mathrm{LB}}^2}/{ \beta_{\mathrm{UB}}^2}$},
			ymode=log,
			cycle list name=color list,
			unbounded coords=jump,
			legend pos=outer north east,
			legend cell align=left,
			legend style={draw=none,font=\tiny},
			clip=false,
			y=0.25cm, 
			]
			\addplot+[line width=2pt] table[y=eta] 
				{./HeHo_95_100_SCM_conv_scm.txt};
		\end{axis}
		\end{tikzpicture}	
		\caption{\label{fig:helmholtz_scm_conv}SCM-convergence Helmholtz equation on $\mathcal{P}^{(2)}$.}
\end{wrapfigure} 
In order to compare the results concerning the hierarchical estimator with the \emph{best possible} standard one, we determined the involved discrete $\inf$-$\sup$ constant $\beta^{\mathcal{N}} (\mu)$ by computing the smallest eigenvalue of a generalized eigenvalue problem. As this is not online efficient, we used the SCM for the online comparisons 
in terms of CPU time. 
By \eqref{eq:Helmholtz-inf-sup}, we expect fairly small $\inf$-$\sup$ constants for large wavenumbers, which is expected to cause problems in $\Delta_N^\mathrm{Std}$. This fact is also mirrored by the poor convergence of the SCM shown in Figure \ref{fig:helmholtz_scm_conv}. For a good performance of $\Delta_N^\mathrm{Std}$ in terms of sharpness, one needs a good online approximation of $\beta^{\mathcal{N}} (\mu)$ resulting in many SCM iterations and large CPU times.

We start by describing the result for the low-frequency parameter set $\mathcal{P}^{(1)}$ and reduce ourselves to the strong greedy sampling since the results for the weak greedy with various error estimators turned out to be pretty much the same. As we can see in Figure \ref{fig:Helmholtz_M1_strong} both standard and hierarchical error estimator are quite sharp and the constants $\Theta_{N,M}$ are small -- overall a similar behavior as for the thermal block.
\begin{figure}[!htb]
	\caption{\label{fig:Helmholtz_M1_strong}Helmholtz equation, 
	$\mathcal{P}^{(1)} = [1,5]$, strong greedy.
		Average error over test set. 
		Red, solid: true error; blue, dashed: residual error estimator; 
		black, dotted: hierarchical error estimator for $M\in\{N+1, M = N+2\}$.}
	\subfigure[Strong greedy, $M = N + 1$.]{
		\begin{tikzpicture}[scale=0.45]
		\begin{axis}[
			width=0.75\textwidth,
			xlabel={$N$},
			ymode=log,
			cycle list name=color list,
			unbounded coords=jump,
			legend pos=outer north east,
			legend cell align=left,
			legend style={draw=none,font=\tiny},
			clip=false,xtick=data,
			y=0.382cm, 
			]
			\addplot+[line width=1.5pt] table[y=err] 
				{./HeHo_1_5_Basisgen_Strong_HeHo_Av_Err_data_M=N+1.txt};
			\addplot+[line width=1.5pt,dashed] table[y=std] 
				{./HeHo_1_5_Basisgen_Strong_HeHo_Av_Err_data_M=N+1.txt};
			\addplot+[line width=1.5pt,dotted] table[y=hier] 
				{./HeHo_1_5_Basisgen_Strong_HeHo_Av_Err_data_M=N+1.txt};
		\end{axis}
		\end{tikzpicture}
		\label{fig:Helmholtz_M1_strong_N+1}
	}
	\scriptsize{
	\begin{tabular}[b]{cc}
	\toprule
	$N$ & $\Theta_{N,N+1}$ \\ 
	\midrule
	$1$ & $0.3254$ \\ 
	$2$ & $0.6396$ \\ 
	$3$ & $0.1027$ \\ 
	$4$ & $0.3714$ \\ 
	$5$ & $0.0737$ \\ 
	\bottomrule
	\end{tabular} 
	}
	\subfigure[Strong greedy, $M = N + 2$.]{
		\begin{tikzpicture}[scale=0.45]
		\begin{axis}[
			width=0.75\textwidth,
			xlabel={$N$},
			ymode=log,
			cycle list name=color list,
			unbounded coords=jump,
			legend pos=outer north east,
			legend cell align=left,
			legend style={draw=none,font=\tiny},
			clip=false,xtick=data,
			y=0.41cm, 
			]
			\addplot+[line width=1.5pt] table[y=err] 
				{./HeHo_1_5_Basisgen_Strong_HeHo_Av_Err_data_M=N+2.txt};
			\addplot+[line width=1.5pt,dashed] table[y=std] 
				{./HeHo_1_5_Basisgen_Strong_HeHo_Av_Err_data_M=N+2.txt};
			\addplot+[line width=1.5pt,dotted] table[y=hier] 
				{./HeHo_1_5_Basisgen_Strong_HeHo_Av_Err_data_M=N+2.txt};
		\end{axis}
		\end{tikzpicture}
		\label{fig:Helmholtz_M1_strong_N+2}
		}
	\scriptsize{
	\begin{tabular}[b]{cc}
	\toprule
	$N$ & $\Theta_{N,N+2}$ \\ 
	\midrule
	$1$ & $0.1211$ \\ 
	$2$ & $0.0657$ \\ 
	$3$ & $0.0178$ \\ 
	$4$ & $0.0103$ \\ 
	$5$ & $0.0228$ \\ 
	\bottomrule
	\end{tabular} 
	}
\end{figure}
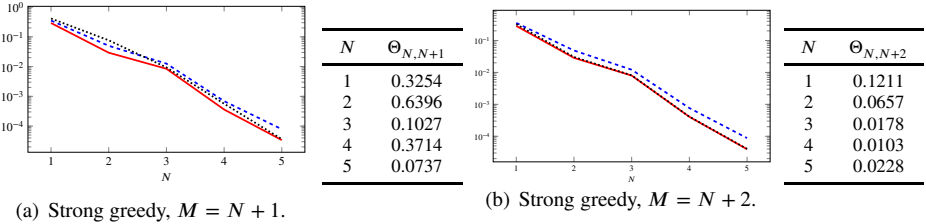 

Next, we consider the (smaller) high frequency parameter set $\mathcal{P}^{(2)}$ and again restrict ourselves to the strong greedy sampling (the results for different versions of the weak are again quite similar). First, we note that the minimal choice of $M=N+1$ for the hierarchical error estimator is not sufficient in order to yield sharp estimates as can be seen in the left graph in Figure \ref{fig:Helmholtz_M2_strong}. We have also found that the saturation property cannot be guaranteed numerically in this case. In the right graph, we thus use a Lagrange basis with $M=N+2$ and obtain bounds that are even better than for the standard estimator. Recall, that the blue dashed line for $\Delta_N^{\text{Std}}$ is w.r.t.\ to a high-fidelity approximation for the $\inf$-$\sup$ constant, i.e., the best possible standard residual-based error bound. Also the values for $\Theta_{N,M}$ are quite good. Thus, $\Delta_{N,N+2}^\mathrm{Hier}$ is a cheap and sharp error bound even for the high-frequency case.
\begin{figure}[!htb]
	\caption{\label{fig:Helmholtz_M2_strong}Helmholtz equation, 
		$\mathcal{P}^{(2)} = [95,100]$, 
		strong greedy.
		Average error over test set of parameters. 
		Red, solid: true error; 
		blue, dashed: residual error estimator; 
		black, dotted: hierarchical error estimator with $M\in\{N+1, N+2\}$.}
	\subfigure[Strong greedy, $M = N + 1$.]{
		\begin{tikzpicture}[scale=0.45]
		\begin{axis}[
			width=0.75\textwidth,
			xlabel={$N$},
			ymode=log,
			cycle list name=color list,
			unbounded coords=jump,
			legend pos=outer north east,
			legend cell align=left,
			legend style={draw=none,font=\tiny},
			clip=false,xtick=data,
			y=0.495cm, 
			]
			\addplot+[line width=1.5pt] table[y=err] 
			{./HeHo_95_100_Basisgen_Strong_HeHo_Av_Err_data_M=N+1.txt};
			\addplot+[line width=1.5pt,dashed] table[y=std] 
			{./HeHo_95_100_Basisgen_Strong_HeHo_Av_Err_data_M=N+1.txt};
			\addplot+[line width=1.5pt,dotted] table[y=hier] 
			{./HeHo_95_100_Basisgen_Strong_HeHo_Av_Err_data_M=N+1.txt};
		\end{axis}
		\end{tikzpicture}
		\label{fig:Helmholtz_M2_strong_N+1}
	}
	\scriptsize{
	\begin{tabular}[b]{cc}
	\toprule
	$N$ & $\Theta_{N,N+1}$ \\ 
	\midrule
	$1$ & $0.7075$ \\ 
	$2$ & $0.8407$ \\ 
	$3$ & $0.7173$ \\ 
	$4$ & $0.9563$ \\ 
	$5$ & $0.2672$ \\ 
	$6$ & $0.2088$ \\ 
	\bottomrule
	\end{tabular} 
	}
	\subfigure[Strong greedy, $M = N + 2$.]{
		\begin{tikzpicture}[scale=0.45]
		\begin{axis}[
			width=0.75\textwidth,
			xlabel={$N$},
			ymode=log,
			cycle list name=color list,
			unbounded coords=jump,
			legend pos=outer north east,
			legend cell align=left,
			legend style={draw=none,font=\tiny},
			clip=false,xtick=data,
			y=0.535cm, 
			]
			\addplot+[line width=1.5pt] table[y=err] 
			{./HeHo_95_100_Basisgen_Strong_HeHo_Av_Err_data_M=N+2.txt};
			\addplot+[line width=1.5pt,dashed] table[y=std] 
			{./HeHo_95_100_Basisgen_Strong_HeHo_Av_Err_data_M=N+2.txt};
			\addplot+[line width=1.5pt,dotted] table[y=hier] 
			{./HeHo_95_100_Basisgen_Strong_HeHo_Av_Err_data_M=N+2.txt};
		\end{axis}
		\end{tikzpicture}
		\label{fig:Helmholtz_M2_strong_N+2}
		}
	\scriptsize{
	\begin{tabular}[b]{cc}	
	\toprule
	$N$ & $\Theta_{N,N+2}$ \\ 
	\midrule
	$1$ & $0.5948$ \\ 
	$2$ & $0.3390$ \\ 
	$3$ & $0.3770$ \\ 
	$4$ & $0.0610$ \\ 
	$5$ & $0.0120$ \\ 
	$6$ & $0.0093$ \\ 
	\bottomrule
	\end{tabular}
	}
\end{figure}
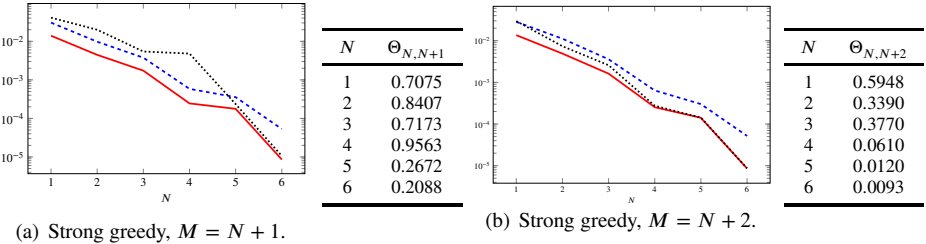

Finally, we consider $\mathcal{P}^{(3)}$, which is a high frequency parameter set of doubled size as compared to $\mathcal{P}^{(2)}$. The error plots for the strong greedy sampling are shown in Figure \ref{fig:Helmholtz_M3_strong}. In this case, the Lagrange-based space $X_M$ for $M=N+2$ only yields reasonable results for $N\geq4$ (for smaller values, the saturation is not guaranteed), but then $\Delta_{N,N+2}^\mathrm{Hier}$ outperforms $\Delta_N^{\mathrm{Std}}$ in terms of accuracy. As we can see from the right-hand side of the figure, $M=N+3$ gives quite sharp results for $N\geq 3$. Again, for smaller values of $N$, the saturation is not justified.
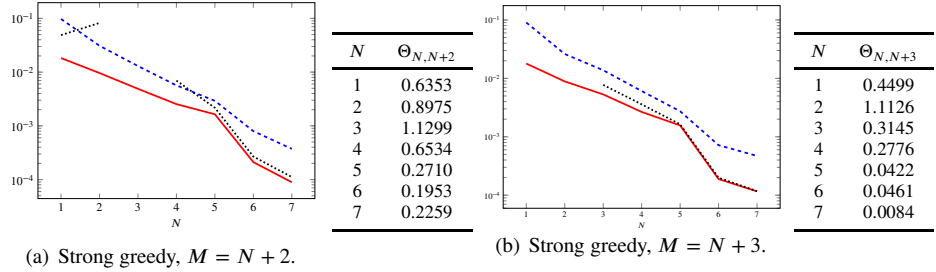
\begin{figure}[!htb]
	\caption{\label{fig:Helmholtz_M3_strong}Helmholtz equation, 
		$\mathcal{P}^{(3)} = [90,100]$, 
		strong greedy.
		Average error over test set of parameters. 
		Red, solid: true error; 
		blue, dashed: residual error estimator; 
		black, dotted: hierarchical error estimator, $M\in\{ N+2, N+3\}$.}
	\subfigure[Strong greedy, $M = N + 2$.]{
		\begin{tikzpicture}[scale=0.45]
		\begin{axis}[
			width=0.75\textwidth,
			xlabel={$N$},
			ymode=log,
			cycle list name=color list,
			unbounded coords=jump,
			legend pos=outer north east,
			legend cell align=left,
			legend style={draw=none,font=\tiny},
			clip=false,xtick=data,
			y=0.69cm, 
			]
			\addplot+[line width=1.5pt] table[y=err] 
				{./HeHo_90_100_Basisgen_Strong_heho_90_100_Strong_M=N+2.txt};
			\addplot+[line width=1.5pt,dashed] table[y=std] 
				{./HeHo_90_100_Basisgen_Strong_heho_90_100_Strong_M=N+2.txt};
			\addplot+[line width=1.5pt,dotted] table[y=hier] 
				{./HeHo_90_100_Basisgen_Strong_heho_90_100_Strong_M=N+2.txt};
		\end{axis}
		\end{tikzpicture}
		\label{fig:Helmholtz_M3_strong_N+2}
	}
	\scriptsize{
	\begin{tabular}[b]{cc}
	\toprule
	$N$ & $\Theta_{N,N+2}$ \\ 
	\midrule
	$1$ & $0.6353$ \\ 
	$2$ & $0.8975$ \\ 
	$3$ & $1.1299$ \\ 
	$4$ & $0.6534$ \\ 
	$5$ & $0.2710$ \\ 
	$6$ & $0.1953$ \\ 
	$7$ & $0.2259$ \\ 
	\bottomrule
	\end{tabular} 
	}
	\subfigure[Strong greedy, $M = N + 3$.]{
		\begin{tikzpicture}[scale=0.45]
		\begin{axis}[
			width=0.75\textwidth,
			xlabel={$N$},
			ymode=log,
			cycle list name=color list,
			unbounded coords=jump,
			legend pos=outer north east,
			legend cell align=left,
			legend style={draw=none,font=\tiny},
			clip=false,xtick=data,
			y=0.75cm, 
			]
			\addplot+[line width=1.5pt] table[y=err] 
				{./HeHo_90_100_Basisgen_Strong_heho_90_100_Strong_M=N+3.txt};
			\addplot+[line width=1.5pt,dashed] table[y=std] 
				{./HeHo_90_100_Basisgen_Strong_heho_90_100_Strong_M=N+3.txt};
			\addplot+[line width=1.5pt,dotted] table[y=hier] 
				{./HeHo_90_100_Basisgen_Strong_heho_90_100_Strong_M=N+3.txt};
		\end{axis}
		\end{tikzpicture}
		\label{fig:Helmholtz_M3_strong_N+3}
		}
	\scriptsize{
	\begin{tabular}[b]{cc}
	\toprule
	$N$ & $\Theta_{N,N+3}$ \\ 
	\midrule
	$1$ & $0.4499$ \\ 
	$2$ & $1.1126$ \\ 
	$3$ & $0.3145$ \\ 
	$4$ & $0.2776$ \\ 
	$5$ & $0.0422$ \\ 
	$6$ & $0.0461$ \\ 
	$7$ & $0.0084$ \\ 
	\bottomrule
	\end{tabular} 
	}
\end{figure}

Due to the lack of saturation for the Lagrange-type construction, we also tested the Taylor approach. We obtained even better results for all parameter sets. For $\mathcal{P}^{(3)}$, we display the results of a weak greedy sampling in Figure \ref{fig:Helmholtz_M3_HierErrEst_Derivatives}. Even for $K_n = 2$, we got good results as can be seen by the fact that the values of $\Theta_{N,M}$ are close to zero. Moreover, $\Delta_{N,M}^\mathrm{Hier}$ is quite sharp. The situation even improves for $K_n=3$ in terms of sharpness for small $N$.
\begin{figure}[!htb]
	\caption{\label{fig:Helmholtz_M3_HierErrEst_Derivatives}
	Helmholtz equation, $\mathcal{P}^{(3)} = [90,100]$, 
	weak greedy with parameter sampling via hierarchical error estimator.
		Average error over test set of parameters. 
		Red, solid: true error; blue, dashed: residual error estimator; 
		black, dotted: hierarchical error estimator, with Taylor basis, $K_n \in \{2,3\}$.}
	\subfigure[Weak greedy by $\frac{\Delta_{N,M}(\mu)}{1 - \Theta_{N,M}^{\mathcal{N}}}$, 
		 $K_n = 2$.]{
		\begin{tikzpicture}[scale=0.45]
		\begin{axis}[
			width=0.75\textwidth,
			xlabel={$N$},
			ymode=log,
			cycle list name=color list,
			unbounded coords=jump,
			legend pos=outer north east,
			legend cell align=left,
			legend style={draw=none,font=\tiny},
			clip=false,xtick=data,
			y=0.553cm, 
			]
			\addplot+[line width=1.5pt] table[y=err] 
				{./HeHo_90_100_Basisgen_Hier_Helmholtz_90_100_2Ableitungen.txt};
			\addplot+[line width=1.5pt,dashed] table[y=std] 
				{./HeHo_90_100_Basisgen_Hier_Helmholtz_90_100_2Ableitungen.txt};
			\addplot+[line width=1.5pt,dotted] table[y=hier] 
				{./HeHo_90_100_Basisgen_Hier_Helmholtz_90_100_2Ableitungen.txt};
		\end{axis}
		\end{tikzpicture}
		\label{fig:Helmholtz_M3_HierErrEst_twoDerivatives}
	}
	\scriptsize{
	\begin{tabular}[b]{cc}
	\toprule
	$N$ & $\Theta_{N,M}$ \\ 
	\midrule
	$1$ & $0.8435$ \\ 
	$2$ & $0.9789$ \\ 
	$3$ & $0.4281$ \\ 
	$4$ & $0.1216$ \\ 
	$5$ & $0.0003$ \\ 
	$6$ & $0.0014$ \\ 
	$7$ & $0.0017$ \\ 
	$8$ & $0.0247$ \\ 
	$9$ & $0.0131$ \\ 
	$10$ & $0.0834$ \\ 
	$11$ & $0.0391$ \\ 
	$12$ & $0.0399$ \\ 
	\bottomrule
	\end{tabular} 
	}
	\subfigure[Weak greedy by $\frac{\Delta_{N,M}(\mu)}{1 - \Theta_{N,M}^{\mathcal{N}}}$,  
		 $K_n = 3$.]{
		\begin{tikzpicture}[scale=0.45]
		\begin{axis}[
			width=0.77\textwidth,
			xlabel={$N$},
			ymode=log,
			cycle list name=color list,
			unbounded coords=jump,
			legend pos=outer north east,
			legend cell align=left,
			legend style={draw=none,font=\tiny},
			clip=false,xtick=data,
			y=0.512cm, 
			]
			\addplot+[line width=1.5pt] table[y=err] 
				{./HeHo_90_100_Basisgen_Hier_Helmholtz_90_100_3Ableitungen.txt};
			\addplot+[line width=1.5pt,dashed] table[y=std] 
				{./HeHo_90_100_Basisgen_Hier_Helmholtz_90_100_3Ableitungen.txt};
			\addplot+[line width=1.5pt,dotted] table[y=hier] 
				{./HeHo_90_100_Basisgen_Hier_Helmholtz_90_100_3Ableitungen.txt};
		\end{axis}
		\end{tikzpicture}
		\label{fig:Helmholtz_M3_HierErrEst_threeDerivatives} 
	}
	\scriptsize{
	\begin{tabular}[b]{cc}
	\toprule
	$N$ & $\Theta_{N,M}$ \\ 
	\midrule
	$1$ & $0.8196$ \\ 
	$2$ & $0.7687$ \\ 
	$3$ & $0.1058$ \\ 
	$4$ & $0.0000$ \\ 
	$5$ & $0.0001$ \\ 
	$6$ & $0.0029$ \\ 
	$7$ & $0.0052$ \\ 
	$8$ & $0.0160$ \\ 
	$9$ & $0.0350$ \\
	$10$ & $0.0401$ \\ 
	$11$ & $0.0221$ \\ 
	$12$ & $0.0418$ \\  
	\bottomrule
	\end{tabular} 
	}
\end{figure}
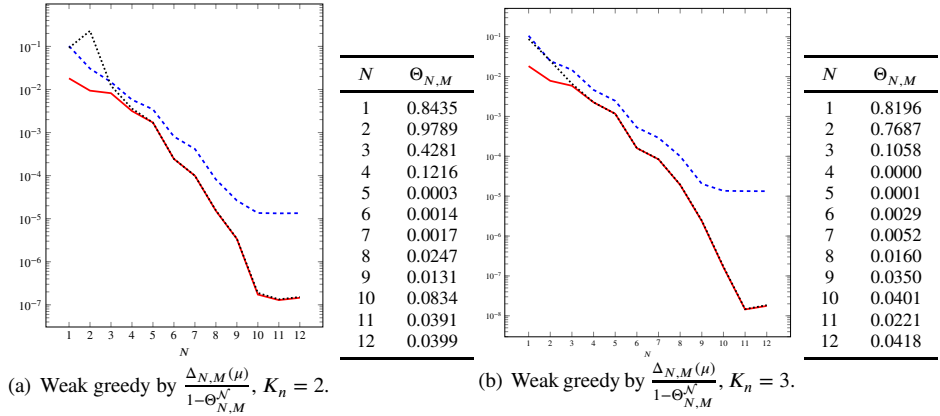

\textbf{Online effectivity.} 
As before in \S \ref{sec:ThermalBlock} for the thermal block, we compare the online efficiencies of standard and hierarchical error estimator, see Figure \ref{fig:Helmholtz_scatter}. First, note that we could not include values for the larger high-frequency parameter range $\mathcal{P}^{(3)}$ there, since the SCM required for $\Delta_N^{\text{Std}}$ did not converge, which means that the standard bound cannot be used in an online-efficient manner.\footnote{In addition, the SCM did not converge at all using a discontinuous Galerkin truth discretization.}

In Figure \ref{fig:Helmholtz_scatter}, we show the effectivity over the online CPU time, again for $\Delta_N^{\text{Std}}$ by crosses and for $\Delta_{N,M}^\mathrm{Hier}$ (for different values of $M$) by circles. First, we note that the values of $M$ almost do not influence the CPU times, so that we can easily adjust the accuracy, as before. Moreover, the accuracies of both bounds are quite comparable, but the computation of $\Delta_{N,M}^\mathrm{Hier}$ is much faster.
\begin{figure}[!htb]
	\centering
		\begin{tikzpicture}[scale=0.8]
		\begin{axis}[
			width=0.99\textwidth,
			xlabel={time [s]},
			ylabel=$\eta$,
			y=0.05cm,
			xtick={0,0.02,0.04,0.06,0.08,0.1},
			]
			\addplot[only marks, mark=x,black, mark options={mark size=3}] table[x=time,y=scmTOL] {./HeHo_95_100_ScatterPlot_Basisgen_Strong_N=4_mean_time_over_mean_efficiency_std_err_est_1000_random_sampling_points.txt};
			\addplot[only marks,mark=o,black, mark options={mark size=3}] table[x=time,y=eta] {./HeHo_95_100_ScatterPlot_Basisgen_Strong_N=4_M=N+1_M=N+2_M=N+3_mean_time_over_mean_efficiency_hier_err_est_1000_random_sampling_points.txt};
			\addplot[only marks,mark=x,black, mark options={mark size=3}] table[x=time,y=scmTOL] {./HeHo_95_100_ScatterPlot_Basisgen_Strong_N=5_mean_time_over_mean_efficiency_std_err_est_1000_random_sampling_points.txt};
			\addplot[only marks,mark=o,black, mark options={mark size=3}] table[x=time,y=eta] {./HeHo_95_100_ScatterPlot_Basisgen_Strong_N=5_M=N+1_M=N+2_M=N+3_mean_time_over_mean_efficiency_hier_err_est_1000_random_sampling_points.txt};
			\addplot[only marks,mark=x,black, mark options={mark size=3}] table[x=time,y=scmTOL] {./HeHo_95_100_ScatterPlot_Basisgen_Strong_N=6_mean_time_over_mean_efficiency_std_err_est_1000_random_sampling_points.txt};
			\addplot[only marks,mark=o,black, mark options={mark size=3}] table[x=time,y=eta] {./HeHo_95_100_ScatterPlot_Basisgen_Strong_N=6_M=N+1_M=N+2_mean_time_over_mean_efficiency_hier_err_est_1000_random_sampling_points.txt};		
			\addplot[only marks, mark=x,black, mark options={mark size=3}] table[x=time,y=scmTOL] {./HeHo_1_5_ScatterPlot_Basisgen_Strong_N=4_mean_time_over_mean_efficiency_std_err_est_1000_random_sampling_points.txt};
			\addplot[only marks,mark=o,black, mark options={mark size=3}] table[x=time,y=M] {./HeHo_1_5_ScatterPlot_Basisgen_Strong_N=4_M=N+1_M=N+2_mean_time_over_mean_efficiency_hier_err_est_1000_random_sampling_points.txt};
			\addplot[only marks,mark=x,black, mark options={mark size=3}] table[x=time,y=scmTOL] {./HeHo_1_5_ScatterPlot_Basisgen_Strong_N=3_mean_time_over_mean_efficiency_std_err_est_1000_random_sampling_points.txt};
			\addplot[only marks,mark=o,black, mark options={mark size=3}] table[x=time,y=M] {./HeHo_1_5_ScatterPlot_Basisgen_Strong_N=3_M=N+1_M=N+2_M=N+3_mean_time_over_mean_efficiency_hier_err_est_1000_random_sampling_points.txt};			
		\end{axis}
		\end{tikzpicture}
		\caption{\label{fig:Helmholtz_scatter} Effectivity index $\eta$ over online CPU-time 
			for Helmholtz problem on $\mathcal{P}^{(1)}$, $\mathcal{P}^{(2)}$;
		 strong greedy sampling. 
		 Circles: Hierarchical error estimator for different $M$; 
		 crosses: Standard error estimator.}
\end{figure}
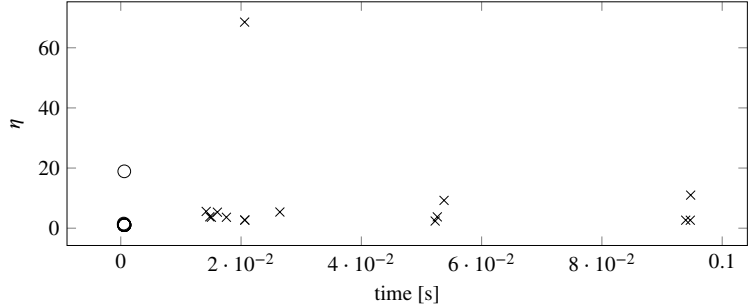

\subsection{Conclusions}
Let us come back to the questions from the beginning of this section: 
\begin{enumerate}
	\item \emph{How is the performance of $\Delta_{N,M}^\mathrm{Hier}$ as compared to $\Delta_N^\mathrm{Std}$?}\newline
	Even for those cases that are in favor of $\Delta_N^\mathrm{Std}$ (stable with precise knowledge of the $\inf$-$\sup$ constant), $\Delta_{N,M}^\mathrm{Hier}$ turned out to yield a sharp error bound and to be online efficient. The potential becomes even more pronounced for problems with bad $\inf$-$\sup$ behavior.
	\item \emph{How does the performance depend on a sharp lower $\inf$-$\sup$ bound?}\newline
	The poorer the $\inf$-$\sup$ estimate is, the more $\Delta_N^\mathrm{Std}$ is outperformed by $\Delta_{N,M}^\mathrm{Hier}$ -- in terms of sharpness and efficiency.
	\item \emph{What is a reasonable choice for the constant $\Theta_{N,M}$?}\newline
	In all tested examples, we got very reasonable values for $\Theta_{N,M}$, provided that the saturation holds. However, even the determination via a test set requires the computation of possibly many truth solutions, the optimization problem \eqref{eq:rootfindingproblem} for the verification of the saturation and the computation of $\Theta_{M,N}^{\mathcal{N}}$ is quite costly, even though done offline. But our results show that it might be sufficient to do this on a fairly small test set since we got nice results in all case.
	\item \emph{What is a good choice for $X_M$?}\newline
	In all investigated cases, $M$ could be chosen quite moderate. This is due to the fact that our problems are of elliptic flavor even in the Helmholtz case. In \cite{2017arXiv170404139F,RB:Transport} for problems involving transport phenomena, $X_M$ has to be chosen significantly larger. However, we have also seen that even for problems with very small $\inf$-$\sup$ constant, $X_M$ can be chosen reasonably small. Moreover, the online CPU-times seem almost independent on the choice of $X_M$ and are much smaller as for computing $\Delta_N^\mathrm{Std}$ using the SCM (if the SCM converges at all).
	
	We compared also Lagrange- and Taylor-type approaches to construct $X_M$. Trying to use the Lagrange approach within parameter sampling using a weak greedy approach resulted in multiple selections of snapshots and non-guaranteed saturation. Both problems could be resolved using the Taylor approach, which, however, requires a certain regularity of $u$ with respect to the parameter. In this case, for a fixed $N$, we are able to improve the effectivity by increasing the order of derivatives. 
\end{enumerate}

\nocite{*}
\bibliographystyle{abbrv}
\bibliography{HORU_references.bib}

\begin{thebibliography}{10}

\bibitem{WaveletRB}
M.~Ali, K.~Steih, and K.~Urban.
\newblock Reduced basis methods with adaptive snapshot computations.
\newblock {\em Adv.\ Comp.\ Math.}, pages 1--38, 2016.

\bibitem{Babuska1}
I.~M. Babu{\v{s}}ka and S.~A. Sauter.
\newblock {Is the Pollution Effect of the FEM Avoidable for the Helmholtz
  Equation Considering High Wave Numbers?}
\newblock {\em SIAM J.\ Numer.\ Anal.}, 34(6):2392--2423, 1997.

\bibitem{BankHier1}
R.~E. Bank and R.~K. Smith.
\newblock A posteriori error estimates based on hierarchical bases.
\newblock {\em SIAM J.\ Numer.\ Anal.}, 30(4):921--935, 1993.

\bibitem{EIM}
M.~Barrault, Y.~Maday, N.~C. Nguyen, and A.~T. Patera.
\newblock {An 'Empirical interpolation' method: Application to efficient
  reduced-basis discretization of partial differential equations}.
\newblock {\em C.R.\ Acad.\ Sci.\ Math.}, 339(9):667 -- 672, 2004.

\bibitem{ConvergenceRB}
P.~Binev, A.~Cohen, W.~Dahmen, R.~DeVore, G.~Petrova, and P.~Wojtaszczyk.
\newblock Convergence rates for greedy algorithms in reduced basis methods.
\newblock {\em SIAM J.\ Math.\ Anal.}, 43(3):1457--1472, 2011.

\bibitem{RB:Transport}
J.~Brunken, K.~Smetana, and K.~Urban.
\newblock Parameterized first order transport equations: Realization of
  optimally stable petrov-galerkin methods.
\newblock Ulm Univ., preprint, 2017.

\bibitem{MR2877366}
A.~Buffa, Y.~Maday, A.~T. Patera, C.~Prud'homme, and G.~Turinici.
\newblock {\it {A} priori} convergence of the greedy algorithm for the
  parametrized reduced basis method.
\newblock {\em ESAIM Math.\ Model.\ Numer.\ Anal.}, 46(3):595--603, 2012.

\bibitem{Buhr20144094}
A.~Buhr, C.~Engwer, M.~Ohlberger, and S.~Rave.
\newblock A numerically stable a posteriori error estimator for reduced basis
  approximations of elliptic equations.
\newblock {\em 11th World Congress on Computational Mechanics, WCCM 2014, 5th
  European Conference on Computational Mechanics, ECCM 2014 and 6th European
  Conference on Computational Fluid Dynamics, ECFD 2014}, pages 4094--4102,
  2014.

\bibitem{CanutoSM1}
C.~Canuto.
\newblock {\em Spectral methods: fundamentals in single domains}.
\newblock Springer, Berlin; Heidelberg; New York, 2006.

\bibitem{CanutoSM2}
C.~Canuto.
\newblock {\em Spectral methods: evolution to complex geometries and
  applications to fluid dynamics}.
\newblock Springer, Berlin; Heidelberg, 2007.

\bibitem{CTU09}
C.~Canuto, T.~Tonn, and K.~Urban.
\newblock A posteriori error analysis of the reduced basis method for nonaffine
  parametrized nonlinear {PDE}s.
\newblock {\em SIAM J. Numer. Anal.}, 47(3):2001--2022, 2009.

\bibitem{CHEN20081295}
Y.~Chen, J.~S. Hesthaven, Y.~Maday, and J.~Rodr{\'i}guez.
\newblock A monotonic evaluation of lower bounds for inf-sup stability
  constants in the frame of reduced basis approximations.
\newblock {\em C.R.\ Acad.\ Sci.\ Math.}, 346(23):1295 -- 1300, 2008.

\bibitem{MaxwellSCM}
Y.~Chen, J.~S. Hesthaven, Y.~Maday, and J.~Rodr{\'i}guez.
\newblock {Improved successive constraint method based a posteriori error
  estimate for reduced basis approximation of 2D Maxwell's problem}.
\newblock {\em ESAIM Math.\ Model.\ Numer.\ Anal.}, 43(6):1099--1116, 2009.

\bibitem{MR1392158}
J.~R. Cho and J.~T. Oden.
\newblock A priori modeling error estimates of hierarchical models for
  elasticity problems for plate- and shell-like structures.
\newblock {\em Math. Comput. Modelling}, 23(10):117--133, 1996.

\bibitem{Dinkelbach}
W.~Dinkelbach.
\newblock On nonlinear fractional programming.
\newblock {\em Management Science}, 13(7):492--498, 1967.

\bibitem{MR2950678}
C.~Dom{\'\i}nguez, E.~P. Stephan, and M.~Maischak.
\newblock A {FE}-{BE} coupling for a fluid-struct{\-}ure interaction problem:
  hierarchical a posteriori error estimates.
\newblock {\em Numer. Methods Partial Differential Equations},
  28(5):1417--1439, 2012.

\bibitem{DHO12}
M.~Drohmann, B.~Haasdonk, and M.~Ohlberger.
\newblock Reduced basis approximation for nonlinear parametrized evolution
  equations based on empirical operator interpolation.
\newblock {\em SIAM J. Sci. Comput.}, 34(2):A937--A969, 2012.

\bibitem{HPParabolicPDE}
J.~L. Eftang, D.~J. Knezevic, and A.~T. Patera.
\newblock An hp certified reduced basis method for parametrized parabolic
  partial differential equations.
\newblock {\em Mathematical and Computer Modelling of Dynamical Systems},
  17(4):395--422, 2011.

\bibitem{MR2746617}
J.~L. Eftang, A.~T. Patera, and E.~M. R{\o}nquist.
\newblock An `{$hp$}' certified reduced basis method for parametrized elliptic
  partial differential equations.
\newblock {\em In: SIAM J.\ Sci.\ Comput.}, 32(6):3170--3200, 2010.

\bibitem{Melenk}
S.~Esterhazy and J.~M. Melenk.
\newblock {\em On Stability of Discretizations of the Helmholtz Equation},
  pages 285--324.
\newblock Springer Berlin Heidelberg, Berlin, Heidelberg, 2012.

\bibitem{2017arXiv170404139F}
J.~{Feinauer}, S.~{Hein}, S.~{Rave}, S.~{Schmidt}, D.~{Westhoff}, J.~{Zausch},
  O.~{Iliev}, A.~{Latz}, M.~{Ohlberger}, and V.~{Schmidt}.
\newblock {MULTIBAT: Unified workflow for fast electrochemical 3D simulations
  of lithium-ion cells combining virtual stochastic microstructures,
  electrochemical degradation models and model order reduction}.
\newblock {\em ArXiv e-prints}, Apr. 2017.

\bibitem{RB:Wave}
S.~Glas, A.~Patera, and K.~Urban.
\newblock Reduced basis methods for the wave equation.
\newblock Unpublished manuscript, 2017.

\bibitem{Haasdonk:RB}
B.~Haasdonk.
\newblock {Reduced Basis Methods for Parametrized PDEs --- A Tutorial}.
\newblock In P.~Benner, A.~Cohen, M.~Ohlberger, and K.~Willcox, editors, {\em
  Model Reduction and Approximation}, chapter~2, pages 65--136. SIAM,
  Philadelphia, 2017.

\bibitem{HDO11}
B.~Haasdonk, M.~Dihlmann, and M.~Ohlberger.
\newblock A training set and multiple bases generation approach for
  parameterized model reduction based on adaptive grids in parameter space.
\newblock {\em Math. Comput. Model. Dyn. Syst.}, 17(4):423--442, 2011.

\bibitem{RozzaRB}
J.~S. Hesthaven, G.~Rozza, and B.~Stamm.
\newblock {\em Certified Reduced Basis Methods for Parame{\-}trized Partial
  Differential Equations}.
\newblock Springer International Publishing, 2016.

\bibitem{HesthavenEFI}
J.~S. Hesthaven, B.~Stamm, and S.~Zhang.
\newblock Certified {R}educed {B}asis {M}ethod for the {E}lectric {F}ield
  {I}ntegral {E}quation.
\newblock {\em SIAM J.\ Sci.\ Comput.}, 34(3):A1777--A1799, 2012.

\bibitem{HesthavenNDG}
J.~S. Hesthaven and T.~Warburton.
\newblock {\em Nodal Discontinuous Galerkin Methods: Algorithms, Analysis, and
  Applications}.
\newblock Springer Publishing Company, Incorporated, 1st edition, 2007.

\bibitem{Huang2011}
Y.~Huang, H.~Wei, W.~Yang, and N.~Yi.
\newblock {\em A New a Posteriori Error Estimate for Adaptive Finite Element
  Methods}, pages 63--74.
\newblock Springer Berlin Heidelberg, Berlin, Heidelberg, 2011.

\bibitem{SCM}
D.~B.~P. Huynh, G.~Rozza, S.~Sen, and A.~T. Patera.
\newblock A successive constraint linear optimization method for lower bounds
  of parametric coercivity and inf-sup stability constants.
\newblock {\em C.R.\ Acad.\ Sci.\ Math.}, 345(8):473 -- 478, 2007.

\bibitem{Babuska2}
F.~Ihlenburg and I.~Babu{\v{s}}ka.
\newblock {Finite element solution of the Helmholtz equation with high wave
  number Part I: The h-version of the FEM}.
\newblock {\em Comp.\ Math.\ Appl.}, 30(9):9 -- 37, 1995.

\bibitem{Babuska3}
F.~Ihlenburg and I.~Babu{\v{s}}ka.
\newblock {Finite Element Solution of the Helmholtz Equation with High Wave
  Number Part II: The h-p Version of the FEM}.
\newblock {\em SIAM J.\ Numer.\ Anal.}, 34(1):315--358, 1997.

\bibitem{OR16}
M.~Ohlberger and S.~Rave.
\newblock Reduced basis methods: Success, limitations and future challenges.
\newblock {\em Proceedings of the Conference Algoritmy}, pages 1--12, 2016.

\bibitem{Ohlberger:true}
M.~Ohlberger, S.~Rave, and F.~Schindler.
\newblock True error control for the localized reduced basis method for
  parabolic problems.
\newblock In {\em Model Reduction of Parametrized Systems}, pages 169--182.
  Springer International Publishing, Cham, 2017.

\bibitem{MR3431132}
M.~Ohlberger and F.~Schindler.
\newblock Error control for the localized reduced basis multiscale method with
  adaptive on-line enrichment.
\newblock {\em SIAM J. Sci. Comput.}, 37(6):A2865--A2895, 2015.

\bibitem{Patera}
A.~Patera and G.~Rozza.
\newblock {\em Reduced Basis Approximation and A Posteriori Error Estimation
  for Parametrized Partial Differential Equations}.
\newblock MIT, Cambridge (MA), USA, 2006.
\newblock Version 1.0.

\bibitem{MR611953}
P.~J. Prince and J.~R. Dormand.
\newblock High order embedded {R}unge-{K}utta formulae.
\newblock {\em J. Comput. Appl. Math.}, 7(1):67--75, 1981.

\bibitem{QuarteroniRB}
A.~Quarteroni, A.~Manzoni, and F.~Negri.
\newblock {\em Reduced basis methods for partial differential equations: An
  introduction}.
\newblock Springer International Publishing, Cham; Heidelberg, 2016.

\bibitem{DGBetrice}
B.~Rivi\`ere.
\newblock {\em Discontinuous Galerkin Methods for Solving Elliptic and
  Parabolic Equations}.
\newblock Society for Industrial and Applied Mathematics, 2008.

\bibitem{SpaceTimeSchwabStevenson}
C.~Schwab and R.~Stevenson.
\newblock Space-time adaptive wavelet methods for parabolic evolution problems.
\newblock {\em Mathematics of Computation}, 78(267):1293--1318, 2009.

\bibitem{SpaceTimeUrbanPatera}
K.~Urban and A.~T. Patera.
\newblock An improved error bound for reduced basis approximation of linear
  parabolic problems.
\newblock {\em Mathematics of Computation}, 83(288):1599--1615, 2014.

\bibitem{Oli}
K.~Urban, S.~Volkwein, and O.~Zeeb.
\newblock Greedy sampling using nonlinear optimization.
\newblock In {\em Reduced Order Methods for Modeling and Computational
  Reduction}, pages 137--157. Springer International Publishing, Cham, 2014.

\bibitem{WohlmuthHier}
B.~I. Wohlmuth.
\newblock {Hierarchical a Posteriori Error Estimators for Mortar Finite Element
  Methods with Lagrange Multipliers}.
\newblock {\em SIAM J.\ Numer.\ Anal.}, 36(5):1636--1658, 1999.

\bibitem{MR3318670}
M.~Yano.
\newblock A reduced basis method with exact-solution certificates for steady
  symmetric coercive equations.
\newblock {\em Comput. Methods Appl. Mech. Engrg.}, 287:290--309, 2015.

\bibitem{MR3460105}
M.~Yano.
\newblock A minimum-residual mixed reduced basis method: exact residual
  certification and simultaneous finite-element reduced-basis refinement.
\newblock {\em ESAIM Math.\ Model.\ Numer.\ Anal.}, 50(1):163--185, 2016.

\bibitem{MR703179}
O.~C. Zienkiewicz, D.~W. Kelly, J.~Gago, and I.~Babu{\v{s}}ka.
\newblock Hierarchical finite element approaches, error estimates and adaptive
  refinement.
\newblock In {\em The mathematics of finite elements and applications, {IV}
  ({U}xbridge, 1981)}, pages 313--346. Academic Press, London-New York, 1982.

\bibitem{MR2776914}
Q.~Zou, A.~Veeser, R.~Kornhuber, and C.~Gr\"aser.
\newblock Hierarchical error estimates for the energy functional in obstacle
  problems.
\newblock {\em Numer. Math.}, 117(4):653--677, 2011.

\end{thebibliography}

\end{document}